\let\preAccentstilde\tilde
\let\preAccentsbar\bar
    \let\tilde\preAccentstilde
    \let\bar\preAccentsbar
\newcommand{\TitleWithUrl}[1]{\IfEmptyBibField{doi}%
  {\IfEmptyBibField{url}{\textit{#1}}%
    {\IfEmptyBibField{eprint}{\href {\BibField{url}}{\textit{#1}}}{\textit{#1}}}%
    }%
  {\href {https://doi.org/\BibField{doi}}{\textit{#1}}}}
\renewcommand{\eprint}[1]{\IfEmptyBibField{url}{\url{#1}}%
  {\href {\BibField{url}}{#1}}}
\theoremstyle{plain}
\newtheorem*{theorem*}{Theorem}
\newtheorem{theorem}{Theorem}[section]
\newtheorem{cor}[theorem]{Corollary}
\newtheorem{prop}[theorem]{Proposition}
\theoremstyle{definition}
\newtheorem{definition}[theorem]{Definition}
\theoremstyle{remark}
\newtheorem{rem}[theorem]{Remark}
\numberwithin{equation}{section}
\renewcommand{\Re}{{\rm Re}\,}
\newcommand{\R}{\mathbb{ R}}
\newcommand{\C}{\mathbb{ C}}
\newcommand{\Z}{\mathbb{ Z}}
\renewcommand{\H}{\mathbb{ H}}
\newcommand{\N}{\mathbb{ N}}
\newcommand{\ttrivial}[1]{\underline{\widetilde\H}^{#1}}
\newcommand{\invers}{^{-1}}
\DeclareMathOperator{\ImQ}{Im}
\DeclareMathOperator{\Span}{span}
\DeclareMathOperator{\dn}{dn}
\DeclareMathOperator{\am}{am}
\DeclareMathOperator{\sn}{sn}
\newlength{\dhatheight}
\begin{document}

\title
{New explicit CMC cylinders and same--lobed CMC multibubbletons}

\author{Joseph Cho}
\address[Joseph Cho]{Institute of Discrete Mathematics and Geometry, TU Wien, Wiedner Hauptstrasse 8-10/104, 1040 Wien, Austria}
\email{jcho@geometrie.tuwien.ac.at}

\author{Katrin Leschke}
\address[Katrin Leschke]{Department of Mathematics,
  University of Leicester, University Road, Leicester LE1 7RH, United
  Kingdom\newline
Research Institute for Mathematical Sciences, Kyoto University, Kyoto, Japan}
\email{k.leschke@leicester.ac.uk }

\author{Yuta Ogata}
\address[Yuta Ogata]{Department of Mathematics, Faculty of Science, Kyoto Sangyo University, 
  Motoyama, Kamigamo, Kita-ku, Kyoto-City, 603-8555, Japan.
 }
\email{yogata@cc.kyoto-su.ac.jp}

\subjclass[2020]{Primary 53A10; Secondary 37K35, 58E20.}
\keywords{CMC surfaces, Darboux transformations, Delaunay surfaces}

%\date{\today}

\begin{abstract} We provide explicit parametrisations of all Darboux transforms of Delaunay surfaces. 
 Using the Darboux transformation on a multiple cover, we obtain this way new closed CMC surfaces with dihedral
 symmetry. These can be used to construct closed same-lobed CMC
 multibubbletons by applying Bianchi permutability.
\end{abstract}

%\thanks{}
\maketitle

\section{Introduction}

In this paper, we investigate Darboux transforms of Delaunay surfaces,
that is, of constant mean
curvature surfaces (CMC) of revolution with
non--vanishing mean curvature. 
Recall that the \emph{classical Darboux transformation} \cite{darboux}
is defined for isothermic surfaces, those surfaces which allow
a conformal curvature line parameterisation. Geometrically, a Darboux
pair is given by a pair of isothermic surfaces, both conformally
enveloping a sphere congruence. Since a CMC surface $f: M \to\R^3$ is isothermic, one
can apply the classical Darboux transformation; however, a classical
Darboux transform $\hat f$ is in general just isothermic, and has
constant mean curvature only if $\hat f$ has pointwise constant
distance from the parallel constant mean curvature surface of $f$, see
\cite{darboux_isothermic}. We will denote these Darboux transforms for
short by \emph{CMC Darboux transforms}. CMC Darboux transforms are equivalent to the
Bianchi-Bäcklund transforms \cite{bianchi_lezioni_1903} and
simple-factor dressings of the extended frame
\cite{kobayashi_characterizations_2005, burstall_isothermic_2006,
simple_factor_dressing,  cho_simple_2019}.

We will construct CMC Darboux transforms via the $\mu$--Darboux
transformation for CMC surfaces: this transformation is a special case
of the (generalised) Darboux transformation for conformal immersions, 
 \cite{conformal_tori}.  We briefly recall the 
$\mu$--Darboux transformation (see also Section~\ref{sec:background}):  By the Ruh--Vilms
Theorem \cite{ruh_vilms} a surface has constant mean curvature if and
only if its Gauss map is harmonic. The harmonicity of the Gauss map
allows for an introduction of an associated family of flat complex connections $d_\lambda$,
$\lambda\in\C_* =\C\setminus\{0\}$.  Then
the $\mu$--Darboux transforms of a CMC surface are exactly given by the
$d_\mu$--parallel sections for some fixed \emph{spectral parameter}  $\mu\in\C_*$. In the case when
$\mu\in\R_*=\R\setminus\{0\}$, this construction  gives exactly all
(classical) CMC Darboux
  transforms, \cite{cmc}.

In the case of CMC Darboux
transforms of  Delaunay surfaces, for a
generic spectral parameter $\mu\in\R_*$ the only periodic 
CMC Darboux transforms are trivial, that is the Darboux transforms are again
reparametrisations, rotations and translations of the original
Delaunay surface.

In case of a round cylinder however Sterling and Wente
\cite{wente_sterling} discovered, while using the Bianchi--B\"{a}cklund
transform, that there are special spectral parameters, the so--called
\emph{resonance points}, so that there are closed, non--trivial CMC
Darboux transforms; these surfaces are called (CMC) \emph{bubbletons}
and are not embedded \cite{Kilian_bubbletons}.  For each $n\in\N, n>1,$
there is a corresponding resonance point
$\mu_n $, and $n$ gives the number of lobes
on the bubble of the bubbleton in case of a single cover of the surface.  This process
has been extended \cite{KobayashiBubbletons, KobayashiBubbletons2} to obtain bubbletons from
any Delaunay surface (and in all space forms).

From our point of view (Section~\ref{sect:dbubble}),  closed CMC Darboux transforms are
characterised by parallel sections \emph{with multiplier}: going
around the circle direction, the parallel section only changes by a
complex multiple. Resonance points are then spectral parameter
$\mu\in\R\setminus\{0, \pm 1\}$ for which all parallel sections are
sections with multiplier. In the case of Delaunay surfaces we first
provide an explicit parameterisation of all Darboux transforms in
terms of Jacobi elliptic functions and the  incomplete elliptic
integral of the third kind by computing parallel sections in a
quaternionic formalism (see Theorem~\ref{thm:parallel explicit}).
We then investigate  the parallel sections with multipliers, and in particular
obtain  resonance points $\mu_n$ which depend
on the necksize $r$ of the Delaunay surface. 
% in dependence of 
% \[ \mu_n^\pm =\frac{1}{2r(1-r)}\left(n^2-1 +2r(1-r) \pm
%     \sqrt{(n^2-1)^2+ 4r(1-r)(n^2-1)}\right),\]
% for $n \in \N, n>1$, where $r\le \frac 12, r\not=0,$ is the necksize
% of the Delaunay surface.
For necksize
$r=\frac 12$ we have the round cylinder, for $0<r<\frac 12$ an
unduloid and $r<0$ a nodoid. As before $n\in\N, n>1$, gives the number
of lobes if the Delaunay surface is embedded. 
 In the case of nodoids, that
is, non--embedded Delaunay surfaces, the necksize $r<0$ imposes the 
condition $n>1-2r$  on the number of lobes on a bubbleton. 
%$\mu_n^\pm\in\R\setminus\{0,\pm 1\}$.

Given two Darboux transforms $f_1$ and $f_2$ 
one can algebraically construct a common Darboux transform $\hat f$
via Bianchi permutability, \cite{bianchi_ricerche_1905, conformal_tori}.  Applying
this construction in our setting, we obtain a \emph{doublebubbleton} for two resonance points
$\mu_n, \mu_l\in\R_*$, $n\not=l, n, l\in\N$, and corresponding
bubbletons $f_1$ and $f_2$ with $n$ and $l$ lobes respectively, 
 that is, a common closed Darboux transform $\hat f$
of $f_1$ and $f_2$ which has constant mean curvature. Such a
doublebubbleton has two bubbles, one with $n$ lobes, the other with
$l$ lobes. For this construction the assumption $n\not=l$ is critical:
for $\mu_n =\mu_l$ the common CMC Darboux transform obtained by Bianchi
permutability is the original Delaunay surface. Repeating the
procedure we can obtain multibubbletons with an arbitrary number of
bubbles but each two bubbles have different numbers of lobes.
Put differently, with
this approach we cannot obtain closed, same--lobbed CMC
multibubbletons.

On the other hand, we showed in \cite{sym-darboux} by using a
Sym--type argument 
that any closed classical double Darboux transform of a Delaunay surface with the
same spectral parameter can only have constant mean curvature  if it is the
original surface.  Therefore, if we
restrict to a single cover of the Delaunay surface, no same--lobed
closed CMC multibubbletons can occur.

We now consider multiple covers of Delaunay
surfaces (Section~\ref{sect:newcmc}). This is motivated by results on Darboux transforms of
isothermic tori \cite{holly_tori}, circletons \cite{kilian_circletons} and
periodic smooth and discrete Darboux transforms \cite{periodic_discrete}: in
these cases (additional) resonance points can be found when
considering multiple covers of the domain.

In the case of a multiple--cover of the round cylinder already Sterling
and Wente \cite{wente_sterling} observed that additional resonance
points $\mu_n^m$ can occur: one
obtains again a closed CMC surface with $n$ lobes on the $m$--fold cover of
the round cylinder where $n$ must be bigger than $m$.  This is a
natural generalisation of the situation on a single
cover. Indeed, in this paper we show that for nodoids and unduloids, one can also
obtain additional resonance points $\mu_n^m$  for $n>m$.
Geometrically, these bubbletons close on an $m$--fold cover and  have non--embedded ends but
look similar to the ``normal'' bubbletons with $n$ lobes on a single
bubble (if $m$ and $n$ are co--prime). As before, in the case of
nodoids there is an obstruction on the number of lobes $n> m(1-2r)$. 

However, the situation is more intriguing for unduloids and nodoids:
in this situation %the resonance points $\mu_n^m$  are given by $m, n\in\N, m,n\ge 1$% :
% \[
%   \mu_{n}^m=\frac{1}{2r(1-r)}\left(\frac{n^2-m^2}{m^2} +2r(1-r) \pm
%   \sqrt{\frac{n^2-m^2}{m^2}\left(\frac{n^2-m^2}{m^2}+
%       4r(1-r)\right)}\right),
% \]
% and
the case $n< m$ can occur  (Theorem~\ref{thm:fullBif}). Geometrically, now
the ``bubbles'' are not localised anymore but the number $n$
still gives the dihedral symmetry of the surface. In contrast to the case of a single
cover, there is a restriction on the order of the symmetry for
unduloids $n<m(1-2r)$ whereas for any choice of $n<m$ we obtain a
bubbleton of a nodoid. In particular, for all nodoids  (and unduloids
with $0<r< \frac{m-1}{2m}$)   it is possible
to obtain a CMC surface with $n=1$, which just has a reflectional
symmetry and one ``lobe''.

To obtain closed, same--lobed CMC multibubbletons, we change our point
of view (Section~\ref{sect:same}). Limiting to the case when $n>m$ (to obtain localised bubbles
with lobes on our bubbletons),  we investigate multibubbletons: if there
are two co--prime pairs $(m_1, n)$ and $(m_2, n)$, $m_1\not=m_2$, so
that the corresponding Darboux transforms $f_1$ and $f_2$ have $n$
bubbles on the $m_1$ and $m_2$--fold cover respectively, then the
common Darboux transform of $f_1$ and $f_2$ given by Bianchi
permutability  has two bubbles, each with
$n$ lobes --- we obtain a CMC same--lobed doublebubbleton which closes
on an $L$--fold cover, where $L$ is the least common multiple of
$m_1$ and $m_2$.

Finally, we observe that the number $1\le l_0\le m$ of resonance
points given by $n$, that is the number of $n$--lobed bubbles we can
put on a Delaunay surface,   depends on the necksize of the
Delaunay surface and co--prime pairs $(m,n)$: for any $l\le l_0$ we
show that there exists a multibubbleton
with $l$ bubbles with $n$ lobes each (Theorem~\ref{thm:samecmc}).

\quad

{\bf Acknowledgements.} The second author would like to thank the
  members of the
  Research Institute for  Mathematical Sciences at Kyoto University
  for their hospitality during her stay as Visiting Professor at the
  Institute. The third author gratefully acknowledges the
  support from Grants-in-Aid of JSPS Research Fellowships for Young
  Scientist 21K13799. 

\section{Background}
\label{sec:background}

In this section we will give a short summary of results and methods
used in this paper. For details on the quaternionic formalism and
CMC surfaces we refer to  \cite{coimbra, klassiker,
  cmc, simple_factor_dressing, darboux_isothermic}.

\subsection{CMC surfaces in the quaternionic model}

In this paper, we investigate the Darboux transformation on
conformal immersions $f: M \to\R^3$ from a Riemann surface into
3--space with constant mean curvature, where we assume
without loss of generality that the mean curvature is $H=1$. By the
Ruh--Vilms theorem \cite{ruh_vilms} the Gauss map $N$ of  a CMC
surface $f$ is 
harmonic. This allows to introduce a spectral parameter and to define,
for example, 
the associated family of flat connections, the associated family of
CMC surfaces, and the spectral curve of a CMC torus,
\cite{hitchin-harmonic, pin&ster, Bob}.

Here we use a quaternionic model and identify 3--space with
the imaginary quaternions $\R^3=\ImQ\H$ where
$\H =\Span_\R\{1, i, j, k\}$ and $i^2=j^2=k^2=ijk=-1$. In the
following we will  identify
\[\H =\Re \H \oplus \ImQ\H =\R\oplus \R^3.
\]
For imaginary
quaternions the product in the quaternions is related to the inner product
$\langle\cdot, \cdot\rangle$ and the cross product in $\R^3$ by
\[
ab =-\langle a,b\rangle +a\times b, \quad a, b\in\ImQ\H.
\]
 In
particular, we see
\[
S^2 =\{ n\in\ImQ\H \mid n^2=-1\}.
\]
Thus, if $f:  M \to\R^3$ is an immersion then its Gauss map $N: M \to
S^2$ is a complex structure $N^2=-1$ on $\R^4=\H$. Moreover, if $(M, J_{TM})$ is a
Riemann surface, where    $J_{TM}$ is the complex structure on the tangent space, then $f: M \to \R^3$ is conformal if and only if
\[
*df  = N df = -df N,
\]
where $*$ denotes the negative Hodge star operator, that is,
$*\omega(X) = \omega(J_{TM}X)$ for $X\in TM$,
$\omega\in\Omega^1(M)$.

Denoting the $(1,0)$--part ($(0,1)$-part, respectively) of a 1--form $\omega$ with respect
to the complex structure $N$ by
\[
  \omega' = \frac 12(\omega -
  N*\omega)
\]
(and $\omega''=\frac 12(\omega+N*\omega)$, respectively), we have for any conformal immersion $f: M
\to\R^3$ that
\begin{equation}
  \label{eq:H}
  df H = -(dN)'.
\end{equation}
In the case when $H=1$, the decomposition of $dN$ into $(0,1)$--
and $(1,0)$--part is thus given by 
\[
  dN = dg-df
\]
where $dg =(dN)''$ and $df=-(dN)'$. Put differently, $g= f+N$ is the
parallel CMC surface of $f$ since $N_g = -N$ and thus $dg = -(dN_g)'$.

In particular, we obtain the following reformulation of the Ruh--Vilms
theorem: a conformal immersion $f$ has constant mean curvature $H$ if
and only if $N$ is harmonic, that is, 
\[
  d((dN)')=0.
\]
Harmonicity allows for the introduction of  a complex spectral
parameter (for details in our setting, see
\cite{cmc}) to obtain the family of connections $d_\lambda$, $\lambda\in\C_* =
\C\setminus\{0\}$, given by
\begin{equation}
  \label{eq:associated family}
  d_\lambda \alpha = d\alpha -\frac 12(dN)'(N\alpha(a-1)+\alpha b)
\end{equation}
where $a=
\frac{\lambda+\lambda\invers}2, b=i
\frac{\lambda\invers-\lambda}2$.

The associated family can now be used to characterise CMC surfaces:
the connections $d_\lambda$ of the associated family (\ref{eq:associated family}) of a map $N: M \to S^2$
are flat for all $\lambda\in\C_*$ if and only if $N$ is harmonic.

\begin{rem}
  In case when $\lambda = a+ib\in S^1$ we have
$a, b\in\R$ and the family of flat connections is quaternionic.  
In general, the family of flat connections is defined on the trivial
$\C^2$--bundle $M\times\C^2= \underline{\C}^2$ over $M$ where we
consider $\C^2 = (\H, I)$ with the complex structure $I$ given by
right multiplication by $i$. In this case, we have the reality
condition
\begin{equation}
  \label{eq:reality}
  d_\lambda(\alpha j) = (d_{\bar\lambda\invers}\alpha)j.
\end{equation}
\end{rem}

\subsection{CMC surfaces and Darboux transforms}

We are now in the position to recall the Darboux transformation of CMC
surfaces. In \cite{conformal_tori} a Darboux transformation on
conformal immersions $f: M \to\R^3$ was defined, generalising the
classical Darboux transformation in \cite{darboux}. In this paper we
consider the so--called $\mu$--Darboux transformation on CMC surfaces,
which gives the $\mu$--Darboux transforms by the choice of a spectral
parameter $\mu\in\C_*$ and a $d_\mu$--parallel section. The
$\mu$--Darboux transformation is a special case of the generalised
Darboux transformation and preserves the CMC property, up to
translation. Additionally, the $\mu$--Darboux transforms with
$\mu\in\R\setminus\{0,1\}$ are exactly the classical Darboux
transforms of a CMC surface which are CMC. In this paper, we
investigate these \emph{(classical) CMC Darboux transforms} in the case of Delaunay
surfaces.  For a detailed study of the
link between these different Darboux transformations, we refer to
\cite{isothermic_paper}. 

We recall:
\begin{theorem}[\cite{cmc}]
  \label{thm:mudtcmc}
  Let $f: M\to\R^3$ be a CMC surface with $H=1$, and let $d_\lambda$
  be its associated family of flat connections.  For $\mu\in\R\setminus\{0,1\}$, let
  $\alpha\in\Gamma(\ttrivial{})$ be a $d_\mu$--parallel section of the
  trivial $\C^2$--bundle $\tilde M \times \C^2 = \tilde M \times \H$ over the universal cover $\tilde M$ of $M$.

  Then $\hat f=  f+ T$ is a CMC Darboux transform of $f$ where
  $T$ is given algebraically by
  \[
    T\invers = \frac12(N(a-1) + \alpha b \alpha\invers),
  \]
  and $a=\frac{\mu +\mu\invers}2, b= i\frac{\mu\invers-\mu}2$. 
  All classical CMC Darboux transforms of $f$  arise this
  way. 
\end{theorem}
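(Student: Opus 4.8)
The plan is to verify that the algebraically defined quantity $\hat f = f + T$ is a conformal CMC immersion, and that it is the Darboux transform associated to the $d_\mu$-parallel section $\alpha$, by reducing everything to the flatness of $d_\mu$ and the harmonicity of $N$. First I would recall from the general theory of \cite{conformal_tori} that a $d_\mu$-parallel section $\alpha$ determines a Darboux transform via a closed $1$-form, so that $\hat f = f + T$ with $dT$ expressible in terms of $\alpha$, $df$, and $dN$; the content of the theorem is the \emph{explicit algebraic} formula for $T^{-1}$ in terms of $N$, $\alpha$, and the constants $a,b$ built from $\mu$. So the first real step is to show that the section $\alpha$, being $d_\mu$-parallel, satisfies the differential equation one gets by writing out \eqref{eq:associated family} with $\lambda = \mu$, namely $d\alpha = \tfrac12 (dN)'(N\alpha(a-1) + \alpha b)$, and to use this to compute $dT$.

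Next I would establish that $T$ as defined is pointwise invertible and imaginary-valued in the appropriate sense so that $\hat f = f + T$ lands in $\R^3 = \ImQ\H$. Writing $T^{-1} = \tfrac12(N(a-1) + \alpha b \alpha^{-1})$, I would differentiate and substitute the parallel-section equation for $d\alpha$, using the splitting $dN = dg - df$ from the background section together with the reality relation \eqref{eq:reality} to control the quaternionic conjugation. The goal of this computation is to show that $dT$ closes up into the form $dT = \hat T\, d\hat f\, T^{-1}$-type expression (equivalently, that $d\hat f = df + dT$ satisfies the conformality condition $*d\hat f = \hat N\, d\hat f$ for a suitable Gauss map $\hat N$), which is exactly what identifies $\hat f$ as a conformal immersion enveloping the same sphere congruence and hence a classical Darboux transform.

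The heart of the argument, and the step I expect to be the main obstacle, is verifying that $\hat f$ has constant mean curvature $H=1$ rather than being merely isothermic. By the discussion preceding \eqref{eq:H}, this amounts to showing $\hat N$ is harmonic, or equivalently that $\hat f$ has pointwise constant distance from the parallel CMC surface in the sense of \cite{darboux_isothermic}. The key is that the constants $a = \tfrac{\mu+\mu^{-1}}2$ and $b = i\tfrac{\mu^{-1}-\mu}2$ are precisely those appearing in the associated family, so that the real spectral parameter $\mu \in \R\setminus\{0,1\}$ forces $a,b \in \R$ and the $\mu$-Darboux transform to preserve the CMC condition up to translation; I would track how the associated family $\hat d_\lambda$ of $\hat f$ is gauge-equivalent to $d_\lambda$ via the parallel section, so flatness for all $\lambda$ is inherited and harmonicity of $\hat N$ follows.

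Finally, for the completeness claim --- that \emph{all} classical CMC Darboux transforms arise this way --- I would invoke the identification, established in \cite{cmc} and recalled in the text, between classical CMC Darboux transforms and $\mu$-Darboux transforms for $\mu \in \R\setminus\{0,1\}$. Concretely, given an arbitrary classical CMC Darboux transform $\hat f$, the enveloped sphere congruence and the constant-distance condition determine a real spectral value $\mu$ and a $d_\mu$-parallel section reproducing it, so the construction is surjective onto this class. Since this surjectivity is essentially the definitional content of the $\mu$-Darboux transformation as set up in \cite{cmc}, I expect the genuinely computational difficulty to lie entirely in the CMC-preservation step above, with the explicit form of $T^{-1}$ being the payoff that makes the later Delaunay computations (Theorem~\ref{thm:parallel explicit}) tractable.
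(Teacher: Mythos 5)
First, a point of order: the paper you were asked to match contains \emph{no proof} of this statement --- it is recalled verbatim from \cite{cmc}, as the attribution in the theorem header indicates. So the only meaningful comparison is with the argument in that reference, whose broad strategy your outline does reproduce: write out the parallel-section equation for $d_\mu$, prolong $\alpha$ to get the closed form defining $T$, verify conformality and the enveloping condition, establish the CMC property through the gauge action of the parallel section on the associated family, and finally identify the construction with the classical CMC Darboux transforms. Structurally you are on the right track, but two of your steps contain concrete errors.

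The first is your claim that $\mu\in\R\setminus\{0,1\}$ forces $a,b\in\R$. This is false: $a=\frac{\mu+\mu\invers}{2}\in\R$, but $b=i\frac{\mu\invers-\mu}{2}$ is \emph{purely imaginary}, $b\in i\R$ --- exactly as Remark~\ref{rem:double} states ($\mu=a+ib\in\R$ if and only if $a\in\R$, $b\in i\R$). This is not a cosmetic slip, because it is precisely $b\in i\R$ that makes your own step two work: writing $b=ib_0$ with $b_0\in\R$, the term $\alpha b\alpha\invers=b_0\,(\alpha i\alpha\invers)$ is a real multiple of the imaginary quaternion $\alpha i\alpha\invers$, so $T\invers=\frac12(N(a-1)+\alpha b\alpha\invers)$ takes values in $\ImQ\H$ and $\hat f=f+T$ lands in $\R^3$ with no translation needed. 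If $b$ were a nonzero real scalar, $\alpha b\alpha\invers=b$ would be real-valued and $T$ would not be imaginary; the real part of $T$ is exactly what produces the translation term for general complex $\mu$. The second error is in your completeness step: the identification of classical CMC Darboux transforms with $\mu$-Darboux transforms for real $\mu$ is \emph{not} ``the definitional content'' of the $\mu$-Darboux transformation. The classical transformation is defined by the enveloping sphere congruence (equivalently a Riccati equation), not by parallel sections, and the equivalence --- that a classical Darboux transform is CMC if and only if it has constant distance from the parallel CMC surface \cite{darboux_isothermic}, and that these are exactly the transforms produced by $d_\mu$-parallel sections with $\mu\in\R\setminus\{0,1\}$ --- is a substantive theorem proved in \cite{cmc} by converting the Riccati data back into a parallel section. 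Deferring that direction to \cite{cmc} is legitimate (the present paper does the same), but it must be invoked as a theorem, not waved through as true by construction.
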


\begin{rem}
  \label{rem:double}
  Note that $\mu = a+ i b$ and therefore $\mu\in\R$ if
  and only if $a \in\R, b\in i\R$. In this case, the condition
  $a^2+b^2=1$ shows that $\mu\invers = a-ib$. Therefore, by the reality condition
  (\ref{eq:reality})   $\alpha$ is
  $d_{a+ib}$--parallel if and only if $\alpha j$ is
  $d_{a-ib}$--parallel. In particular, in the case of real parameter,
  the two parameter $\mu = a+ ib$ and $\mu\invers= a-ib$ give rise
  to the same Darboux transforms.

  Moreover, for $\mu=-1$, that is,
  $a=-1, b=0$, the Darboux transform is independent of the parallel
  section $\alpha$ and  we obtain the parallel CMC surface $\hat f = f+N$ as
  the only Darboux transform.
\end{rem}

We now recall the Bianchi permutability theorem
\cite{bianchi_ricerche_1905, conformal_tori}  in our setting which
gives two--step Darboux transforms algebraically, that is, without
further integration, from the parallel sections of the first CMC
surface. 
 
\begin{theorem}[\cite{isothermic_paper}]
  Let $f: M\to\R^3$ be a CMC surface and $\alpha_l\in\Gamma(\ttrivial
  {})$ parallel sections with respect to the flat connections
  $d_{\mu_l}$ where $\mu_l\in\R\setminus\{0,1\}$, $l=1,2$, $\mu_1\not=\mu_2$. Let $f_l = f+
  \alpha_l\beta_l\invers$ be the CMC Darboux transforms given by
  $\alpha_l$ 
  and
  \begin{equation}
    \label{eq:beta}
    \beta_l=\frac 12(N\alpha_l(a_l-1) + \alpha_l b_l)
  \end{equation}
  where $a_l=\frac{\mu_l +\mu_l\invers}2,
  b_l=i\frac{\mu_l\invers-\mu_l}2$.
  
  Then the common $\mu$--Darboux transform $\hat f$ of $f_1$ and $f_2$  is a
  CMC surface. Moreover, the common
  $\mu$--Darboux transform is given by $\hat f = f_1 + \alpha\beta\invers$
  where   \begin{equation}
    \label{eq:commonDTparallel}
   \alpha=\alpha_2-\alpha_1\beta_1\invers \beta_2, \qquad
   \beta=\beta_2-\beta_1\alpha_1\invers
   \alpha_2\frac{a_2-1}{a_1-1}.
 \end{equation}  
\end{theorem}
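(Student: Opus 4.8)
The plan is to establish the Bianchi permutability theorem by verifying directly that the algebraically-defined $\hat{f}$ is simultaneously a $\mu$-Darboux transform of both $f_1$ and $f_2$, and that it closes the Bianchi quadrilateral. The essential strategy exploits the fact that a $\mu$-Darboux transform is completely determined by a $d_\mu$-parallel section of the associated family, and that the associated families of $f_1$ and $f_2$ are gauge-related to that of $f$ via the simple-factor dressing encoded by $\alpha_1$ and $\alpha_2$ respectively. So rather than integrating new parallel sections, I would show that $\alpha = \alpha_2 - \alpha_1\beta_1\invers\beta_2$ descends to a parallel section with respect to the pushed-forward connection on $f_1$'s associated family at the appropriate spectral parameter.

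First I would fix notation for the gauge transformation realising the Darboux transformation: the passage from $f$ to $f_l = f + \alpha_l\beta_l\invers$ corresponds to a $\lambda$-dependent dressing (a simple factor with pole determined by $\mu_l$) that transforms $d_\lambda$ into the associated family $d_\lambda^{(l)}$ of $f_l$. The section $\beta_l$ in \eqref{eq:beta} is precisely the companion section making $T_l\invers = \tfrac12(N(a_l-1)+\alpha_l b_l \alpha_l\invers) = \beta_l \alpha_l\invers$ land $\hat{f}$ on the correct translate, matching Theorem~\ref{thm:mudtcmc}. Next I would take the candidate section $\alpha = \alpha_2 - \alpha_1\beta_1\invers\beta_2$ and compute $d^{(1)}_{\mu_2}\alpha$, where $d^{(1)}$ is $f_1$'s associated family; the goal is to show this vanishes. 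This is a direct computation using that $\alpha_1, \alpha_2$ are $d_{\mu_1}$-, $d_{\mu_2}$-parallel for the \emph{original} $f$, together with the explicit form \eqref{eq:associated family} of how the connection depends on $N$ and on the spectral parameter. The companion section $\beta$ in \eqref{eq:commonDTparallel} then arises as the image of $\alpha$ under the $f_1$-analogue of \eqref{eq:beta}, with the factor $\tfrac{a_2-1}{a_1-1}$ being exactly the normalisation needed so that $\hat{f} = f_1 + \alpha\beta\invers$ equals the symmetric expression $f_2 + (\text{corresponding section})$, thereby proving the two-sided Darboux property.

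The symmetry and closure of the quadrilateral — namely that the same $\hat{f}$ is obtained whether one transforms first by $\alpha_1$ then by $\alpha_2$, or vice versa — I would verify by checking that the expression for $\hat{f}$ is invariant under the simultaneous swap $1 \leftrightarrow 2$ up to the known translation ambiguity; the cross-ratio-type factor $\tfrac{a_2-1}{a_1-1}$ is the antisymmetric piece that makes this consistent. The CMC property of $\hat{f}$ is then automatic: since $\hat{f}$ is exhibited as a genuine $\mu$-Darboux transform of the CMC surface $f_1$ at the real parameter $\mu_2 \in \R\setminus\{0,1\}$, Theorem~\ref{thm:mudtcmc} guarantees it is a classical CMC Darboux transform, hence CMC up to translation.

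The hard part will be the parallelness computation $d^{(1)}_{\mu_2}\alpha = 0$: one must correctly track how the $(1,0)$-part $(dN)'$ and the complex structure $N$ transform under the first dressing step, since $f_1$ has a \emph{different} Gauss map $N^{(1)} \neq N$, and the formula \eqref{eq:associated family} for $d^{(1)}_\lambda$ is built from $N^{(1)}$. Expressing $N^{(1)}$ and $(dN^{(1)})'$ in terms of the original data and $\alpha_1, \beta_1$, and then confirming the telescoping cancellation that uses $d_{\mu_2}\alpha_2 = 0$, is where the algebra concentrates; the assumption $\mu_1 \neq \mu_2$ enters precisely to keep $\beta_1\invers$ and the factor $a_1 - 1$ well-defined and the two poles of the dressing distinct, so that the simple-factor-dressing composition is genuinely two-step rather than degenerate.
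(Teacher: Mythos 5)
A preliminary caveat: the paper contains no proof of this theorem --- it is quoted from \cite{isothermic_paper}, which is listed as in preparation --- so your proposal can only be compared against the framework of Section~2 and the way the formulas are used there (for example in the proof of the corollary on multipliers). Measured that way, your plan is correct and is the expected one: exhibit $\alpha$ from \eqref{eq:commonDTparallel} as a parallel section for the associated family of $f_1$ at the second spectral parameter, identify $\beta$ as its companion in the sense of \eqref{eq:beta}, and then get the CMC property for free from Theorem~\ref{thm:mudtcmc} applied to $f_1$; the closure of the quadrilateral is a finite algebraic identity, as you say. One substantive improvement: the ``hard part'' you isolate --- tracking $N^{(1)}$ and $(dN^{(1)})'$ through the dressing --- can be bypassed entirely. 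From $d_{\mu_l}\alpha_l=0$, \eqref{eq:beta} and $a_l^2+b_l^2=1$ one derives the linearised Riccati system
\begin{equation*}
d\alpha_l=-df\,\beta_l,\qquad d\beta_l=\tfrac{a_l-1}{2}\,dg\,\alpha_l,
\end{equation*}
with $g=f+N$ the parallel surface; setting $T_1=\alpha_1\beta_1\invers$ this gives $df_1=-\tfrac{a_1-1}{2}\,T_1\,dg\,T_1$ and, for the parallel surface $g_1$ of $f_1$, $dg_1=-\tfrac{2}{a_1-1}\,T_1\invers\,df\,T_1\invers$. A short computation then shows that $\alpha,\beta$ of \eqref{eq:commonDTparallel} satisfy $d\alpha=-df_1\,\beta$ and $d\beta=\tfrac{a_2-1}{2}\,dg_1\,\alpha$: in $d\alpha$ the $df$-terms cancel outright and the cross term collapses via $T_1\beta_1\alpha_1\invers=1$, while in $d\beta$ it is precisely the factor $\tfrac{a_2-1}{a_1-1}$ that makes the two $dg$-terms cancel --- confirming your reading of that factor's role, with no Gauss-map bookkeeping at all. (One identification caveat: a solution of this pair system corresponds to a parallel section at $\mu_2$ or at $\mu_2\invers$, but these yield the same Darboux transform by Remark~\ref{rem:double}.) The one genuine misstatement in your write-up concerns the role of $\mu_1\neq\mu_2$: nonvanishing of $a_1-1$ comes from $\mu_1\neq 1$, not from $\mu_1\neq\mu_2$, and the formulas \eqref{eq:commonDTparallel} do not degenerate when $\mu_1=\mu_2$ --- they remain perfectly well defined and simply return $\hat f=f$, exactly as the paper notes immediately after the theorem; the hypothesis guarantees that the common transform is a new surface, not that the construction is well posed.
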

Note that one could also allow $\mu_1=\mu_2$ and consider two
independent $d_{\mu_1}$--parallel sections,  but in this case the
resulting common CMC Darboux transform is $\hat f=  f$.

To investigate closed surfaces we recall:

\begin{definition}
  Let $f: M \to \R^3$ be a CMC surface and $d_\lambda$ its associated
  family of flat connections. For $\mu\in\C_*$ a $d_\mu$--parallel
  section $\alpha$ is called a \emph{section with multiplier} if  $\gamma^*\alpha=\alpha h_\gamma$ for all $\gamma\in\pi_1(M)$. Here
$h_\gamma: \pi_1(M) \to \C_*$ is a group homomorphism, and
$\gamma^*\alpha=\alpha\circ \gamma_\sharp$ where $\gamma_\sharp$ is
the associated deck transformation of $\gamma\in\pi_1(M)$.

A parameter $\mu\in\C_*$ is called a \emph{resonance point} if all
$d_\mu$--parallel sections have a multiplier.
\end{definition}

With this definition we have in the case of a CMC surface (for the
general case of a conformal immersion, see \cite{conformal_tori}):

\begin{prop}
 Let $f: M \to \R^3$ be a CMC surface and $\hat f = f+ T$ a
 $\mu$--Darboux transform given by a $d_\mu$--parallel section
 $\alpha$, $\mu\in\R\setminus\{0,1\}$. Then $\hat f$ is defined on $M$, rather than the
 universal cover $\tilde M$, if and only if $\alpha$ is a section
   with multiplier. 
\end{prop}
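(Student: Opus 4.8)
The plan is to reduce the descent of $\hat f$ to $M$ to a purely algebraic condition on the section $\alpha$, and then to recognise that condition as the multiplier property. Since $f$ and $N$ are already defined on $M$ and the scalars $a,b$ depend only on $\mu$, the only part of $\hat f=f+T$ that a priori lives on the universal cover is the translation $T$. Hence $\hat f$ descends to $M$ precisely when $\gamma^*T=T$ for every $\gamma\in\pi_1(M)$; as pullback commutes with inversion in $\H_*$, this is equivalent to $\gamma^*(T\invers)=T\invers$. Substituting $T\invers=\frac12(N(a-1)+\alpha b\alpha\invers)$ and using $\gamma^*N=N$, I would show the descent condition is equivalent to
\[
(\gamma^*\alpha)\,b\,(\gamma^*\alpha)\invers=\alpha\,b\,\alpha\invers\qquad\text{for all }\gamma,
\]
that is, the pointwise quaternion $g_\gamma:=\alpha\invers(\gamma^*\alpha)$ commutes with $b$.

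At this point the arithmetic of $\H$ carries the argument. Because $\mu\in\R$, the quaternion $b=i\frac{\mu\invers-\mu}2$ lies in $i\R$ and is nonzero as soon as $\mu\neq\pm1$; the value $\mu=-1$ gives $b=0$ and $\hat f=f+N$, the parallel surface, independently of $\alpha$, so I assume $\mu\neq-1$. The centraliser of a nonzero element of $i\R$ in $\H$ is exactly the commutative subfield $\C=\Span_\R\{1,i\}$. Thus the commuting condition says precisely that $g_\gamma$ is $\C$-valued, i.e. $\gamma^*\alpha=\alpha g_\gamma$ with $g_\gamma$ a function into $\C_*$. This settles the ``if'' direction at once: if $\alpha$ has a multiplier then $g_\gamma=h_\gamma\in\C_*$ is a constant commuting with $b\in\C$, the displayed identity holds, and $\hat f$ descends.

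For the converse I must promote the function $g_\gamma$ to a constant and check it defines a homomorphism. Constancy comes from parallelism: the connection $d_\mu$ is built from $f$ and $N$, hence is invariant under deck transformations, so $\gamma^*\alpha$ is again $d_\mu$--parallel. Applying the Leibniz rule of the complex--linear connection to $\gamma^*\alpha=\alpha g_\gamma$ together with $d_\mu\alpha=0$ yields $\alpha\,dg_\gamma=0$; since $\alpha$ is nowhere zero (a nonzero parallel section) and $\tilde M$ is connected, $g_\gamma$ is a constant in $\C_*$. Finally, expanding $(\gamma_1\gamma_2)^*\alpha$ in two ways via $\gamma^*\alpha=\alpha\circ\gamma_\sharp$ shows $\gamma\mapsto g_\gamma$ is multiplicative up to the order of the factors; as every $g_\gamma$ lies in the commutative field $\C$, this is a genuine group homomorphism $\pi_1(M)\to\C_*$, so $\alpha$ is a section with multiplier.

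I expect the main obstacle to be the constancy step in the converse: one must genuinely use that $\gamma^*\alpha$ is \emph{parallel} (not merely that it conjugates $b$ correctly) to exclude a nonconstant $\C$-valued factor, and one must keep track of the fact that the complex structure on the bundle is right multiplication by $i$, so that the Leibniz rule and the scalar $g_\gamma$ interact on the correct side. The commutativity of $\C$ is exactly what upgrades the otherwise anti-multiplicative law into an honest homomorphism, and the nondegeneracy $b\neq0$ (equivalently $\mu\neq\pm1$) underpins the centraliser identification throughout.
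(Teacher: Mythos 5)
Your proof is correct, and in fact it supplies an argument the paper omits: the proposition is stated there without proof, as a specialisation of the general theory of Darboux transforms of conformal immersions with a pointer to the literature, so there is no in-paper proof to match line by line. Your route is the natural one and every step checks out. Descent of $\hat f$ reduces to $\gamma^*(T\invers)=T\invers$ because $f$, $N$ and the scalars $a,b$ already live on $M$; the resulting condition $(\gamma^*\alpha)\,b\,(\gamma^*\alpha)\invers=\alpha b\alpha\invers$ says exactly that $g_\gamma=\alpha\invers(\gamma^*\alpha)$ takes values in the centraliser of $b$, which for $b\in i\R\setminus\{0\}$ is precisely $\C=\Span_\R\{1,i\}$; constancy of $g_\gamma$ follows as you say from deck-invariance of $d_\mu$ and the Leibniz rule, which is legitimate here because $a-1\in\R$ is central and $b$ commutes with $\C$-valued functions acting on the right (the side matters, and you track it); and the anti-multiplicative cocycle $g_{\gamma_1\gamma_2}=g_{\gamma_2}g_{\gamma_1}$ becomes an honest homomorphism $\pi_1(M)\to\C_*$ exactly because $\C_*$ is abelian. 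Your exclusion of $\mu=-1$ is not a defect but a necessary sharpening of the statement: there $b=0$, $T=N$, and $\hat f=f+N$ descends for \emph{every} parallel section, so the ``only if'' direction genuinely fails; this is consistent with Remark~\ref{rem:double} of the paper, which singles out $\mu=-1$ as giving the parallel CMC surface independently of $\alpha$. The only point worth making explicit, which you use silently, is that a nontrivial parallel section of a flat connection is nowhere vanishing; this is what legitimises $\alpha\invers$, the formula for $T\invers$, and the pointwise definition and smoothness of $g_\gamma$.
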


In particular, we can apply this to Bianchi permutability:

\begin{cor}
  Let $f: M \to\R^3$ be a CMC surface and $f_l: M \to\R^3$ be the CMC Darboux transforms
  given by $d_{\mu_l}$--parallel sections $\alpha_l$ with multipliers
  where $\mu_l\in\R\setminus\{0,1\}$, $l=1,2$, $\mu_1\not=\mu_2$. Then
  the common CMC
  Darboux transform $\hat f: M\to\R^3$ of $f_1$ and $f_2$ is defined on $M$ as well.
\end{cor}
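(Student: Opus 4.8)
The plan is to reduce the statement to the preceding Proposition, applied to the pair $(f_1, \hat f)$. By the Bianchi permutability theorem the common CMC Darboux transform $\hat f = f_1 + \alpha\beta\invers$ is the $\mu_2$--Darboux transform of the CMC surface $f_1$ given by the section $\alpha = \alpha_2 - \alpha_1\beta_1\invers\beta_2$. Since $f_1$ is itself already defined on $M$ (by the Proposition, as $\alpha_1$ is a section with multiplier, so in particular $\gamma^* f_1 = f_1$ and $\gamma^* N = N$ for all $\gamma\in\pi_1(M)$), it suffices to show that $\alpha$ is again a section with multiplier; the Proposition then immediately yields that $\hat f$ descends to $M$.

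The key step is to check that each auxiliary section $\beta_l = \frac12(N\alpha_l(a_l-1) + \alpha_l b_l)$ carries the same multiplier as $\alpha_l$. Writing $\gamma^*\alpha_l = \alpha_l h^l_\gamma$ with $h^l\colon\pi_1(M)\to\C_*$, I would compute $\gamma^*\beta_l = \frac12\bigl(N\alpha_l h^l_\gamma(a_l-1) + \alpha_l h^l_\gamma b_l\bigr)$. Here the reality of the spectral parameter $\mu_l\in\R\setminus\{0,1\}$ is essential: it forces $a_l = \frac{\mu_l+\mu_l\invers}2\in\R$ and $b_l = i\frac{\mu_l\invers-\mu_l}2\in i\R$, so that $a_l-1$ (real, hence central) and $b_l$ (complex) both commute with the complex multiplier $h^l_\gamma$. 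Moving $h^l_\gamma$ to the right therefore gives $\gamma^*\beta_l = \beta_l h^l_\gamma$, i.e.\ $\beta_l$ inherits the multiplier of $\alpha_l$.

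With this in hand I would apply $\gamma^*$ to the permutability formula for $\alpha$. Substituting $\gamma^*\alpha_l = \alpha_l h^l_\gamma$, $\gamma^*\beta_l = \beta_l h^l_\gamma$ and using $(\beta_1 h^1_\gamma)\invers = (h^1_\gamma)\invers\beta_1\invers$, the factor $h^1_\gamma(h^1_\gamma)\invers$ coming from $\alpha_1$ and $\beta_1\invers$ cancels, leaving
\[
\gamma^*\alpha = \alpha_2 h^2_\gamma - \alpha_1\beta_1\invers\beta_2 h^2_\gamma = \alpha\, h^2_\gamma .
\]
Thus $\alpha$ is a section with multiplier $h^2$, which is a group homomorphism since $h^2$ is; applying the Proposition to $f_1$ and its Darboux transform $\hat f$ given by $\alpha$ then finishes the proof. (The same computation, using that $\tfrac{a_2-1}{a_1-1}$ is real and hence commutes with $h^2_\gamma$, shows $\gamma^*\beta = \beta\,h^2_\gamma$, so that $\gamma^*(\alpha\beta\invers) = \alpha h^2_\gamma (h^2_\gamma)\invers\beta\invers = \alpha\beta\invers$ and $\gamma^*\hat f = \hat f$ directly, giving a self--contained alternative.)

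The main obstacle is the bookkeeping of the second step: one must track how the complex--valued multipliers interact with the quaternionic quantities $N$, $\alpha_l$ and $\beta_l$, where multiplication is non--commutative. The cancellations that make $\alpha$ and $\beta$ share the single common multiplier $h^2$ rely crucially on $\mu_l$ being real, which pins $a_l$ to $\R$ and $b_l$ to $i\R$; for a genuinely complex spectral parameter these commutations would fail and the common transform would not in general close up on $M$.
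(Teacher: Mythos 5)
Your proof is correct and is essentially the paper's own argument: the paper likewise deduces from \eqref{eq:beta} that $\gamma^*\beta_l = \beta_l (h_l)_\gamma$, applies $\gamma^*$ to \eqref{eq:commonDTparallel} to get $\gamma^*\alpha = \alpha_2 (h_2)_\gamma - \alpha_1 (h_1)_\gamma (h_1)_\gamma\invers \beta_1\invers \beta_2 (h_2)_\gamma = \alpha (h_2)_\gamma$, and concludes via the preceding Proposition, with your justification of the commutations being a welcome extra detail the paper leaves implicit. One caveat on your closing remark: since $a_l-1$, $b_l$ and $h_\gamma$ all lie in the commutative subalgebra $\C\subset\H$ and act by right multiplication, the commutations would \emph{not} fail for genuinely complex $\mu$ --- the multiplier bookkeeping goes through for any $\mu\in\C_*$, and the reality of $\mu_l$ is needed instead to ensure the transforms are classical CMC Darboux transforms into $\R^3$.
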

\begin{proof}
  Let $h_l$ denote the multipliers of the  $d_{\mu_l}$--parallel
  sections $\alpha_l$.  
  By (\ref{eq:beta}) we see that $\gamma^*\beta_l = \beta_l
  (h_l)_{\gamma}$. Then  (\ref{eq:commonDTparallel}) shows that 
\begin{align*}
\gamma^*\alpha & = \gamma^*\alpha_2 - (\gamma^*\alpha_1)
(\gamma^*\beta_1)\invers (\gamma^*\beta_2) = \alpha_2 (h_2)_\gamma -
\alpha_1 (h_1)_\gamma(h_1)_\gamma\invers \beta_1\invers \beta_2
(h_2)_\gamma \\
&= \alpha (h_2)_\gamma\,.
\end{align*}
Therefore, $\alpha$ is a section with multiplier, and $\hat f =
f_1+\alpha\beta\invers$ is defined on $M$.
\end{proof}

\section{Delaunay bubbletons}\label{sect:dbubble}

We will now recall the construction  of (multi-) bubbletons  which are
CMC Darboux transforms at a resonance point of Delaunay surfaces, that is, of
CMC surfaces of revolution.

More generally, we will give all closed CMC Darboux transforms of
Delaunay surfaces in terms of Jacobi elliptic functions and
elliptic integrals.  To this end, we need to find all parallel sections of
the associated family of flat connections explicitly. Recall that surfaces
of revolution
can be conformally parametrised by
\[
  f(x,y) =ip(x) + j q(x)e^{-iy}\,, \qquad \text{ with }
  \quad (p')^2+(q')^2= q^2.
\] 
In the cases of Delaunay surfaces, $p, q$ are given by the Jacobi elliptic functions with parameter
$M=1-(1-\frac 1{1-r})^2$ where $r\in(-\infty, \frac 12)$ is the necksize and 
\[
q(x) = (1-r) \dn((1-r)x, M), \qquad p'(x) = q(x)^2 +r(1-r). 
\]
Note that  $r=\frac 12$ gives a cylinder, $r\in(0, \frac 12)$ unduloids
and $r\in(-\infty, 0)$ nodoids.

To find parallel sections $\alpha$ of a flat connection $d_\mu$ with $
\mu\in\R\setminus\{0, 1\}$, in the  associated family $d_\lambda$ of the
harmonic Gauss map $N$ of $f$, we need to solve
\[
  d\alpha = -\frac 12 df(N\alpha(a-1) + \alpha b),
\]
where $a=\frac{\mu+\mu\invers}2, b=i\frac{\mu\invers-\mu}2$. Here we
used (\ref{eq:H}) to express (\ref{eq:associated family}) in terms of
the CMC surface $f$. 
Since $f$ is conformal we have  $f_xN = -f_y$  and $f_yN = f_x$ so that
we need to solve the PDEs
\begin{align}
  \alpha_x &=\frac 12(f_y\alpha(a-1)-f_x\alpha b) \label{eq:alphax}, \\
    \alpha_y &= -\frac 12(f_x\alpha(a-1)+f_y\alpha b) \label{eq:alphay}.
\end{align}
We consider the second equation (\ref{eq:alphay}) first. We decompose $\alpha =\alpha_0 +
j\alpha_1$ and set $\tilde \alpha_1 = e^{iy} \alpha_1$ to obtain the linear system
\[
  \begin{pmatrix} \alpha_0 \\ \tilde\alpha_1 
  \end{pmatrix}_y = \frac 12\begin{pmatrix} -ip'(a-1) & iqb+ q'(a-1) 
    \\  iqb - q'(a-1) & i(2+p'(a-1))
  \end{pmatrix}
  \begin{pmatrix} \alpha_0\\ \tilde \alpha_1
  \end{pmatrix} =: A \begin{pmatrix} \alpha_0\\ \tilde \alpha_1
  \end{pmatrix}.
\]
The matrix $A$ has eigenvalues
$
  \frac i2(1\pm t)
$
where
\[t=\sqrt{1+2 r(1-r)(a-1)}.
\]

If $t=0$ then $\alpha_+ =\alpha_-$ gives only one
solution. In the case when $r = \frac 12$, we obtain $\mu=-1$, and the
general solutions are given by $\alpha = e^{\frac{iy}2} c$,
$c\in\H$. By Remark \ref{rem:double} the only Darboux transform in
this case is the parallel CMC
surface. For $r<\frac 12$ and $t=0$, the spectral parameter are
$\mu_b = \frac{r}{r-1}$ and $\mu_b\invers = \frac{r-1}r$, and  the matrix $A$ is not
diagonalisable. Therefore, we obtain a second independent  parallel section $\hat \alpha$ by the generalised
eigenvector, but  $\hat\alpha$ is not a section
with multiplier.  This shows that $\mu_b, \mu_b\invers$ are not
resonance points. (Indeed they are
 the branch points of the multiplier spectral curve of the
 unduloids and nodoids with necksize $r\not=\frac 12$.)

 Therefore, we can from now on assume
 $t\not=0$.  We obtain all $d_\mu$--parallel sections, 
 since the matrix has constant coefficients
in $y$ and $p'-q^2=r(1-r)$,  by the following fundamental sections 
\[
\alpha_\pm = e^{\frac {iy}2}\left(q b -i q'(a-1)+ j\left(1 +p'(a-1)\pm
    t\right)\right) e^{ \pm \frac {ity}2}c_\pm,
\]
where  $c_\pm$ are complex valued functions in $x$.

To solve for $c_\pm$,
we compute $(\alpha_\pm)_x$ in two ways. Since $c_\pm$ only
depend on $x$, we  can evaluate at $y=0$ and obtain
	\begin{align*}
		(\alpha_\pm)_x
			&= \left(q' b -i q'' (a-1) + jp''(a-1)\right)
                   c_\pm  \\ & \quad + \left(q b -i q'(a-1)+ j\left(1 +p'(a-1)\pm t\right)\right) c_\pm'.
	\end{align*}
        By (\ref{eq:alphax}) we also have at $y=0$
   \begin{align*}
	(\alpha_\pm)_x &=\frac 12(f_y\alpha_\pm(a-1) - f_x\alpha_\pm
                         b) \\
     &=\frac 12\Big(- i q \left(1 -2p'(a-1)\pm t\right) (a-1) 
       + q' \left(1 \pm t\right)b   \\  &\hspace{4cm}
			 +  j\big(p''(a-1) + i p' (1\pm t)b\big)  \Big) c_\pm\,,
   \end{align*}
   where we used $a^2+b^2=1$, $(p')^2+(q')^2 = q^2$ and $p'' = 2qq'$ since
   $p'=q^2+r(1-r)$. 
   By comparing the $\C$ and $j \C$ parts, respectively, we obtain 
   \begin{subequations}
		\begin{align*}
		(b q -i (a-1) q') c_\pm' &= \frac12 ( - i (a-1)q ( 1
                                           \pm t - 2 p'(a-1))  - bq'(1 \mp
                                           t) + 2 i (a-1) q'' )
                                           c_\pm, %\label{eqn:cpmeq1}
                  \\
		(1\pm t +(a-1)p') c_\pm' &= \frac12 \left(-(a-1)p'' + i(1 \pm t)b p'\right) c_\pm\,.\label{eqn:cpmeq2}
		\end{align*}
              \end{subequations}
              Up to multiplication by the factor $
             - \frac{b q  +
                                 i (a-1) q'}{ 1 \mp t + (a-1)
                                 p'}
                               $ these two equations are the same.
To solve for $c_\pm$, we note that when $1\pm t +(a-1)p' \neq 0$,
the second equation gives
	\begin{align*}
		\frac{c_\pm'}{c_\pm} 
			&= -\frac{1}{2}(\log  |1 \pm t + (a-1)p'|)' + \frac{i b (1\pm t)}{2 (a-1)} - \frac{i b (1\pm t)^2}{2 (a-1)} \frac{1}{1\pm t +(a-1)p'}.
	\end{align*}
Integrating both sides, we obtain
	\[
		\log|c_\pm| = -\frac{1}{2}\log  |1 \pm t + (a-1) p'|+ \frac{i b (1\pm t)}{2 (a-1)}x - \frac{i b (1\pm t)^2}{2 (a-1)} \int \frac{1}{1\pm t +(a-1)p'}\, dx.
              \]
     Abbreviating the Jacobi elliptic functions so that, for example,
     $\dn((1-r)x, M) = \dn$, we have $p' = q^2 + r(1-r)$ for $q =
     (1-r) \dn$. Thus, using $\dn^2 = 1 - M \sn^2$, one calculates
	\begin{align*}
		1\pm t +(a-1)p'
			&= (1\pm t  + (a-1)(1-r)) (1 - N \sn^2)
	\end{align*}
where $N = \frac{(a-1) (1-r)^2 M}{1\pm t  + (a-1)(1-r)}$. Therefore,
it remains to compute
$\int \frac{1}{1 - N \sn^2} \, dx,$ which is well-known to be
 the incomplete elliptic integral of the
 third kind
\begin{align*}
		\int \frac{1}{1 - N \sn^2} \, dx &= \frac{1}{1-r} \Pi(N; \am((1-r)x, M) \,|\, M)\,,
	\end{align*}
where $\am((1-r)x, M)$ is the Jacobi amplitude.

Observing that $\alpha_\pm$ are parallel sections, our results extend smoothly into the zeros of $1\pm t +(a-1)p'$. 
We summarise: 
\begin{theorem}
  \label{thm:parallel explicit}
Let $f(x,y) = ip + j q e^{-iy}$ be a Delaunay surface with necksize
$r$. For $\mu\in\R\setminus\{0, \pm 1\}$, let $a= \frac{\mu
  +\mu\invers}2$ and
\[
 t=\sqrt{1+2 r(1-r)(a-1)}\,.
\]
Then
all $d_\mu$--parallel sections for $t\not=0$  are explicitly given by
\[
\alpha = \alpha_+ m_+ + \alpha_- m_-
\]
with constants $m_\pm\in\C$, where
\[
\alpha_\pm = e^{\frac {iy}2}\left(q b -i q'(a-1)+ j\left(1 +p'(a-1)\pm
    t\right)\right) e^{ \pm \frac {ity}2}c_\pm
\]
and 
\[
c_\pm = \frac{\exp\left(\frac{i b (1\pm t)}{2(a-1)}\left( x -
      \frac{1\pm t}{1\pm t  + (a-1)(1-r)} \frac{1}{1-r} \Pi(
      \am((1-r)x,  M), M)\right)\right)}{\sqrt{|1 \pm t + (a-1) p'|}}\,.
	\]
Here $\Pi$ is the incomplete elliptic integral of the third kind and
$\am$ the Jacobi amplitude.
\end{theorem}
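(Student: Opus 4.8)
The plan is to integrate the parallel-transport equations directly, exploiting the product structure of the Delaunay parametrisation to separate the $x$- and $y$-dependence. A $d_\mu$--parallel section $\alpha$ must satisfy the two first--order PDEs \eqref{eq:alphax} and \eqref{eq:alphay}, obtained from \eqref{eq:associated family} via \eqref{eq:H} and the conformality relations $f_xN=-f_y$, $f_yN=f_x$. The decisive observation is that, after writing $\alpha=\alpha_0+j\alpha_1$ with $\alpha_0,\alpha_1\in\C$, the coefficients of the $y$--equation depend on $x$ only (through $p',q,q'$), so in $y$ the system has constant coefficients and can be solved by exponentiation once an eigenbasis is fixed.

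First I would treat \eqref{eq:alphay}. Setting $\tilde\alpha_1=e^{iy}\alpha_1$ absorbs the explicit factor $e^{-iy}$ in $f$ and converts the equation into a linear system $\partial_y(\alpha_0,\tilde\alpha_1)^{\top}=A\,(\alpha_0,\tilde\alpha_1)^{\top}$ with the matrix $A$ recorded above. Computing its eigenvalues gives $\tfrac{i}{2}(1\pm t)$ with $t=\sqrt{1+2r(1-r)(a-1)}$; when $t\neq0$ the matrix is diagonalisable, and the eigenvectors yield the two fundamental solutions $\alpha_\pm=e^{iy/2}\bigl(qb-iq'(a-1)+j(1+p'(a-1)\pm t)\bigr)e^{\pm ity/2}c_\pm$, where the integration ``constants'' $c_\pm$ are so far arbitrary functions of $x$ alone. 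Since $A$ has two distinct eigenvalues the solution space of \eqref{eq:alphay} is two--dimensional over $\C$ and equals the span of $\alpha_+,\alpha_-$, which gives the asserted form $\alpha=\alpha_+m_++\alpha_-m_-$ once $c_\pm$ are pinned down.

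Next I would determine $c_\pm$ from the remaining equation \eqref{eq:alphax}. Differentiating the ansatz for $\alpha_\pm$ in $x$ and, on the other hand, substituting it into \eqref{eq:alphax}, then evaluating both at $y=0$ and comparing the $\C$-- and $j\C$--components, produces two scalar ODEs for $c_\pm'/c_\pm$; here one uses $a^2+b^2=1$ together with the Delaunay relations $(p')^2+(q')^2=q^2$ and $p''=2qq'$. A short check shows these two ODEs coincide up to the nonzero factor $-\tfrac{bq+i(a-1)q'}{1\mp t+(a-1)p'}$, so the overdetermined system is consistent and reduces to a single separable equation. Integrating it yields a $\log|1\pm t+(a-1)p'|$ contribution together with an integral $\int(1\pm t+(a-1)p')^{-1}\,dx$, which is where the elliptic structure enters.

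The main obstacle is evaluating this last integral in closed form. Using $q=(1-r)\dn$ and the identity $\dn^2=1-M\sn^2$ one factors $1\pm t+(a-1)p'=(1\pm t+(a-1)(1-r))(1-N\sn^2)$ with $N=\tfrac{(a-1)(1-r)^2M}{1\pm t+(a-1)(1-r)}$, reducing the problem to $\int(1-N\sn^2)^{-1}\,dx=\tfrac{1}{1-r}\Pi(N;\am((1-r)x,M)\,|\,M)$, the incomplete elliptic integral of the third kind. Assembling the logarithmic and elliptic contributions gives the stated expression for $c_\pm$. Finally, although that expression carries an apparent singularity at the zeros of $1\pm t+(a-1)p'$, the $\alpha_\pm$ are genuine parallel sections of a smooth flat connection, so these singularities are removable and the formulas extend smoothly across them, completing the proof.
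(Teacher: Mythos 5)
Your proposal is correct and takes essentially the same route as the paper's own argument: you solve the $y$--equation first via the substitution $\tilde\alpha_1=e^{iy}\alpha_1$ and diagonalisation of the resulting constant--coefficient matrix for $t\neq 0$, then pin down $c_\pm$ by computing $(\alpha_\pm)_x$ two ways at $y=0$, noting the two component ODEs agree up to the factor $-\tfrac{bq+i(a-1)q'}{1\mp t+(a-1)p'}$, and reduce the remaining integral to $\Pi$ via the factorisation $1\pm t+(a-1)p'=(1\pm t+(a-1)(1-r))(1-N\sn^2)$. Your closing observation that the apparent singularities at zeros of $1\pm t+(a-1)p'$ are removable because $\alpha_\pm$ are parallel sections of a smooth flat connection is exactly the paper's concluding remark, so there is nothing to add.
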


Since by Theorem \ref{thm:mudtcmc} all CMC Darboux transforms of
Delaunay surfaces are given algebraically by $d_\mu$--parallel
sections, we obtain also:

\begin{theorem}
The CMC Darboux transforms of a Delaunay surfaces can be expressed
explicitly in terms of Jacobi elliptic functions and the incomplete
elliptic integral of the third kind.
\end{theorem}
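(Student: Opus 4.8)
The plan is to obtain this statement as an essentially immediate consequence of the two preceding theorems: no new analysis is required, only the assembly of explicit ingredients together with a check that the assembly stays within the stated function classes. First I would invoke Theorem~\ref{thm:mudtcmc}, which tells us that every classical CMC Darboux transform of the Delaunay surface $f$ has the form $\hat f = f + T$ with
\[
T\invers = \tfrac12\big(N(a-1) + \alpha b\,\alpha\invers\big),
\]
where $a = \tfrac{\mu+\mu\invers}2$, $b = i\tfrac{\mu\invers-\mu}2$, and $\alpha$ is a $d_\mu$--parallel section. Thus $\hat f$ is a finite expression in the quaternion--valued quantities $f$, $N$, $\alpha$ and the real constants $a,b$, combined using only quaternionic multiplication and inversion. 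Since Theorem~\ref{thm:parallel explicit} already furnishes \emph{every} $d_\mu$--parallel section $\alpha$ explicitly in terms of $\sn$, $\dn$, $\am$ and the incomplete elliptic integral of the third kind $\Pi$, the only remaining task is to write $f$ and $N$ in the same terms and to verify that the algebraic operations preserve the function class.

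Next I would record the explicit forms of the surface and its Gauss map. The profile functions satisfy $q = (1-r)\dn((1-r)x, M)$ and $p' = q^2 + r(1-r)$, so that $f = ip + jq\,e^{-iy}$ is expressed through $\dn$ and the integral $\int \dn^2$; the latter is the Jacobi epsilon function, i.e.\ the incomplete elliptic integral of the second kind, which is the only elliptic integral entering $f$ itself (the third kind enters only through the parallel sections). For the Gauss map, I would use that $f$ is conformal with $|f_x|^2 = (p')^2+(q')^2 = q^2$, so $N$ is obtained from the frame $f_x, f_y$ as a rational expression in $p', q, q'$ and $e^{\pm iy}$, hence again in $\dn$, $\sn$ and elementary functions of $y$.

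Finally I would assemble $\hat f = f + T$. The map $\alpha \mapsto \alpha b\,\alpha\invers$ is conjugation of the imaginary quaternion $b$, hence again imaginary; together with $N(a-1)$ this shows $T\invers\in\ImQ\H$ is purely imaginary, so that $T = -T\invers/|T\invers|^2$ is an explicit, manifestly imaginary quaternion wherever $T\invers\neq 0$. Both the conjugation and this inversion are rational operations in the real components of their arguments and therefore introduce no new transcendental functions. Consequently $\hat f$ is a finite algebraic combination of Jacobi elliptic functions, elementary functions of $y$, and the incomplete elliptic integrals of the second and third kinds, as claimed.

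The main obstacle, modest as it is, is not conceptual but computational: exhibiting the Gauss map $N$ in genuine closed form and tracking the real components through the conjugation $\alpha b\,\alpha\invers$ carefully enough that $T$ appears as a truly explicit expression rather than one merely defined algebraically. Granting this bookkeeping, the result follows directly from Theorems~\ref{thm:mudtcmc} and~\ref{thm:parallel explicit}.
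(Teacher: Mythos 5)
Your proposal is correct and takes essentially the same route as the paper, which obtains this theorem in one sentence as an immediate consequence of Theorem~\ref{thm:mudtcmc} (every CMC Darboux transform is $\hat f = f + T$ with $T\invers = \frac12\left(N(a-1)+\alpha b \alpha\invers\right)$ built algebraically from a $d_\mu$--parallel section) combined with the explicit parallel sections of Theorem~\ref{thm:parallel explicit}. Your extra bookkeeping --- writing $N$ in terms of $p', q, q', e^{\pm iy}$, noting that $\alpha b\alpha\invers$ keeps $T\invers$ imaginary, and observing that $p$ itself brings in the second-kind integral via $\int \dn^2$ --- is all sound and merely makes explicit what the paper leaves implicit.
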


\begin{rem}
  The
parallel sections obtained here are special cases of the parallel
sections of a surface of revolution in \cite{isothermic_paper, sym-darboux} (after
adjusting the spectral parameter to account for the choice of dual
surface): we obtain exactly those classical Darboux transforms which
are CMC.
\end{rem}

Now, observe that
$\alpha_\pm$ are sections with multipliers
\[
  h_\pm =- e^{\pm \pi i\sqrt{1+2r(1-r)(a-1)}},
\]
and therefore, since $c_\pm$ are independent of $y$,  resonance points
have to satisfy 
\[
  1+2r(1-r)(a-1) = n^2\,, \quad n\in\N\,.
\]
% As discussed we can exclude the case $t=0$ which only gives rise to
% the parallel CMC surface in the case of $r=\frac 12$ or non-resonance
% points  in the case $r<\frac 12$.
Since $\mu\not=1$ and $t\not=0$ by assumption, the cases $n=0$ and
$n=1$  cannot occur.   Therefore, the (relevant) resonance points are given by
\[
  \mu_n^\pm =\frac{1}{2r(1-r)}\left(n^2-1 +2r(1-r) \pm
    \sqrt{(n^2-1)^2+ 4r(1-r)(n^2-1)}\right),
\]
for $n \in \N, n>1$.

We also recall that $\mu_n^\pm$ give rise to the same Darboux
transform by Remark \ref{rem:double}, allowing us to
consider only one of the solutions $\mu_n=\mu_n^+$.

The (non--trivial) CMC Darboux transforms of a Delaunay surface at resonance
points $\mu_n$  are called \emph{bubbletons}. Geometrically, the integer $n\in \N$ gives the number of lobes of the bubble on the
bubbleton (see Figures~\ref{fig:cylbubbleton} and \ref{fig:undbubbleton}).

\begin{figure}[H]
	\begin{minipage}{0.45\textwidth}
		\centering
		\includegraphics[width=\linewidth]{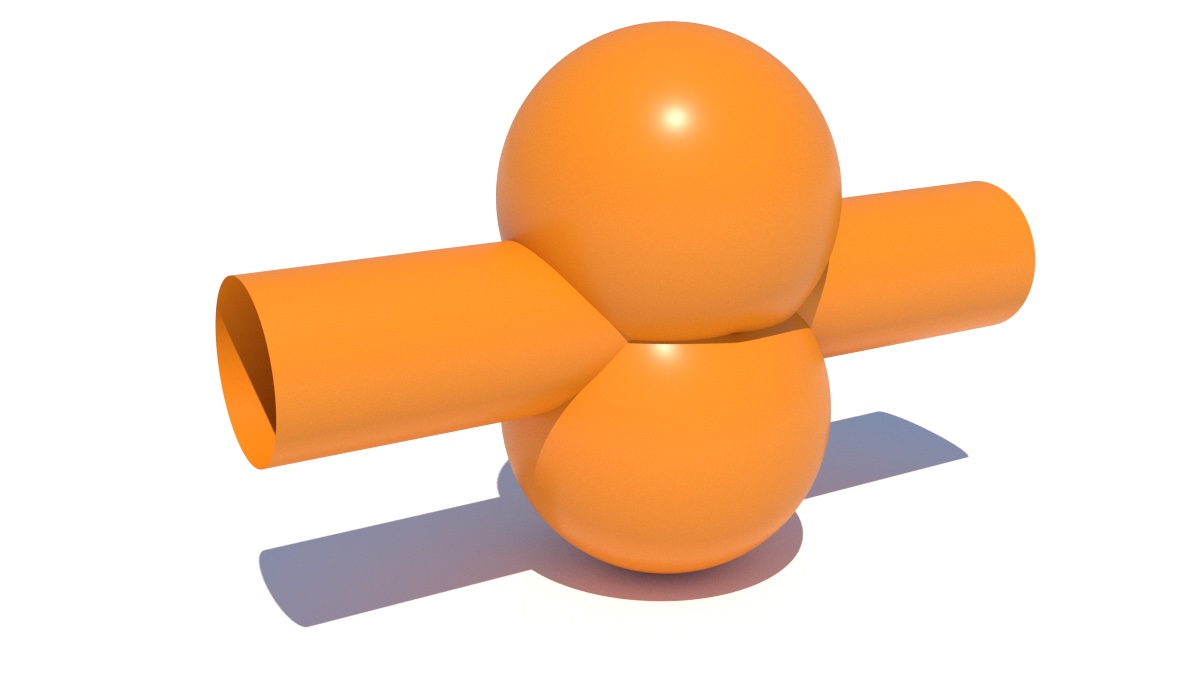}
	\end{minipage}\begin{minipage}{0.45\textwidth}
		\centering
		\includegraphics[width=\linewidth]{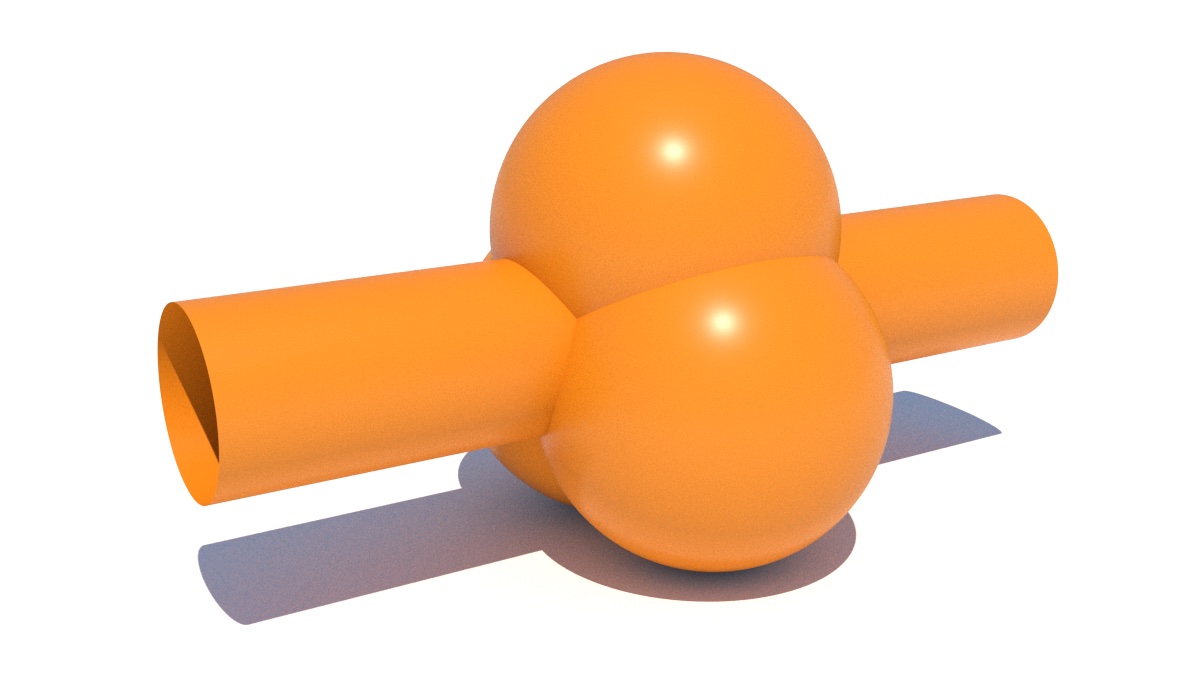}
	\end{minipage}
     \caption{Bubbletons of the cylinder, that is, $r=\frac 12$,  at resonance points $\mu_2=7+4\sqrt 3, \mu_3=
       17 +12\sqrt 2$.}
       \label{fig:cylbubbleton}
   \end{figure}

\begin{figure}[H]
	\begin{minipage}{0.5\textwidth}
		\centering
		\includegraphics[width=0.8\linewidth]{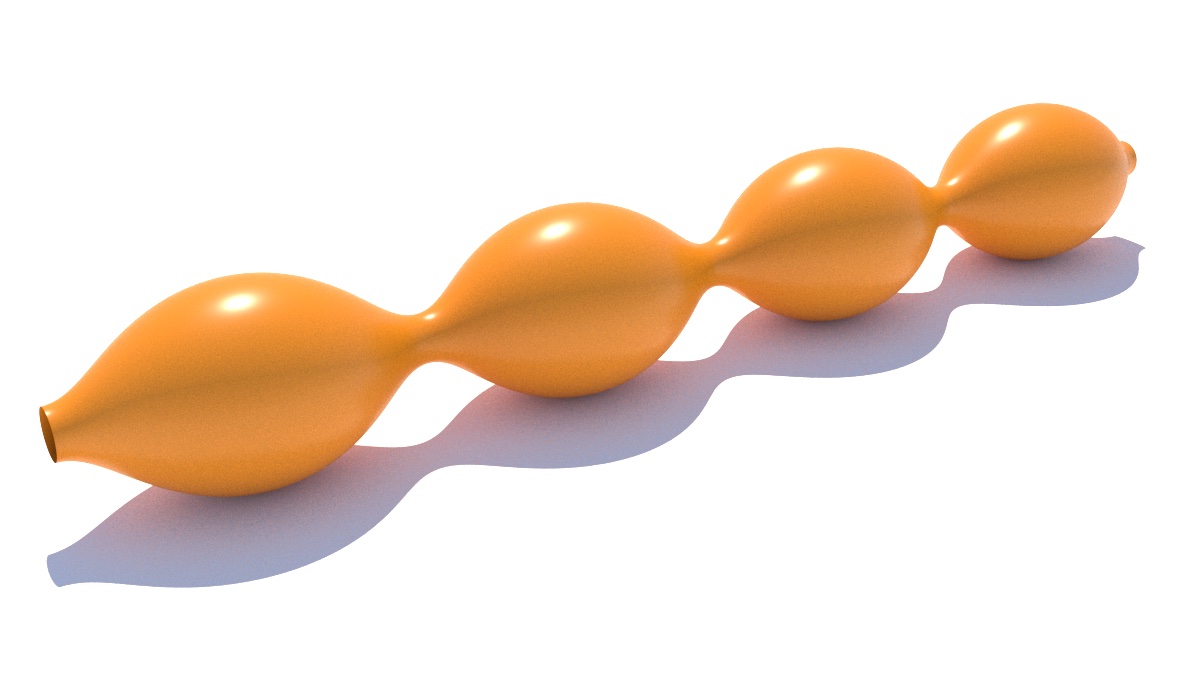}
	\end{minipage}
	\begin{minipage}{0.45\textwidth}
		\centering
		\includegraphics[width=\linewidth]{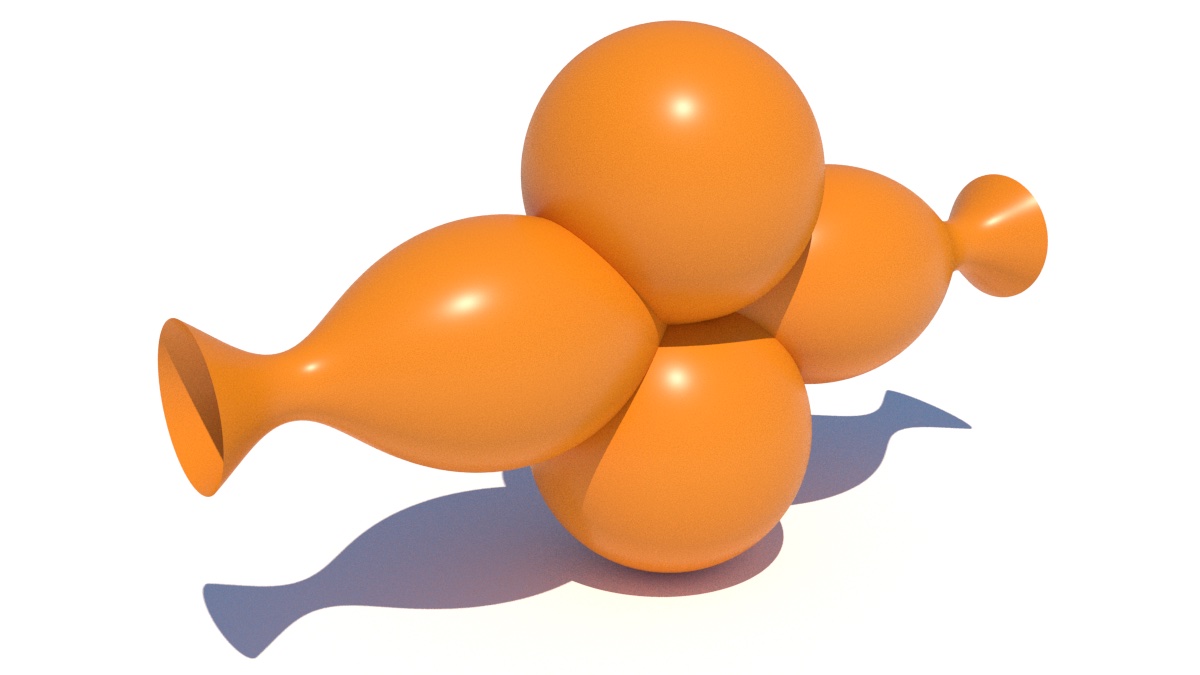}
	\end{minipage}\begin{minipage}{0.45\textwidth}
		\centering
		\includegraphics[width=\linewidth]{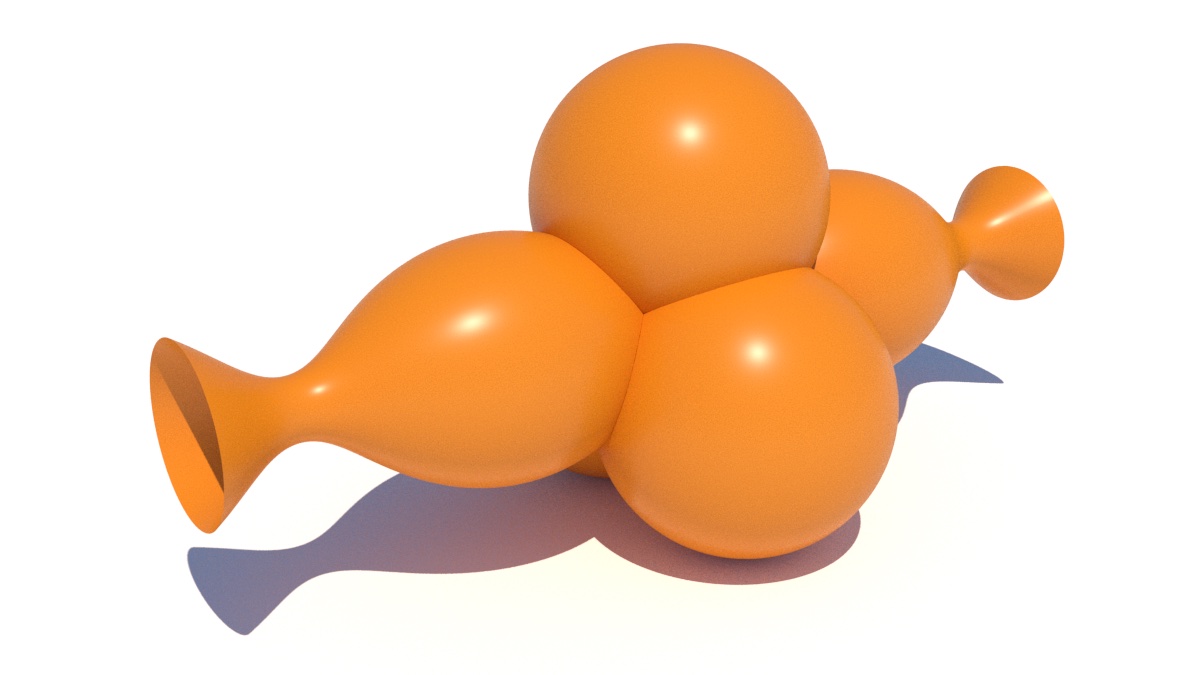}
	\end{minipage}
  \caption{An unduloid with $r=\frac 15$ and  its bubbletons with $n=2$ and $n=3$.}
  \label{fig:undbubbleton}
\end{figure}

Note that we require $\mu_n^\pm\in\R_*$ so that there is an additional condition on
$n$ in the case of a nodoid, see also \cite{KobayashiBubbletons}:
\[
\frac {1-n}2 \le  r< 0.
\]
The case when $n=1-2r$ gives $\mu=-1$ and the
corresponding CMC Darboux transform is the parallel CMC surface, that is,
we only obtain bubbletons for $\frac{1-n}2<r$ (see Figure~\ref{fig:nodbubbleton}). (In the case of the cylinder and unduloids, all
$\mu_n^\pm\in\R\setminus\{0, \pm 1\}$
for $n\in\N$, $n>1$.)

\begin{figure}[H]
	\begin{minipage}{0.45\textwidth}
		\centering
		\includegraphics[width=\linewidth]{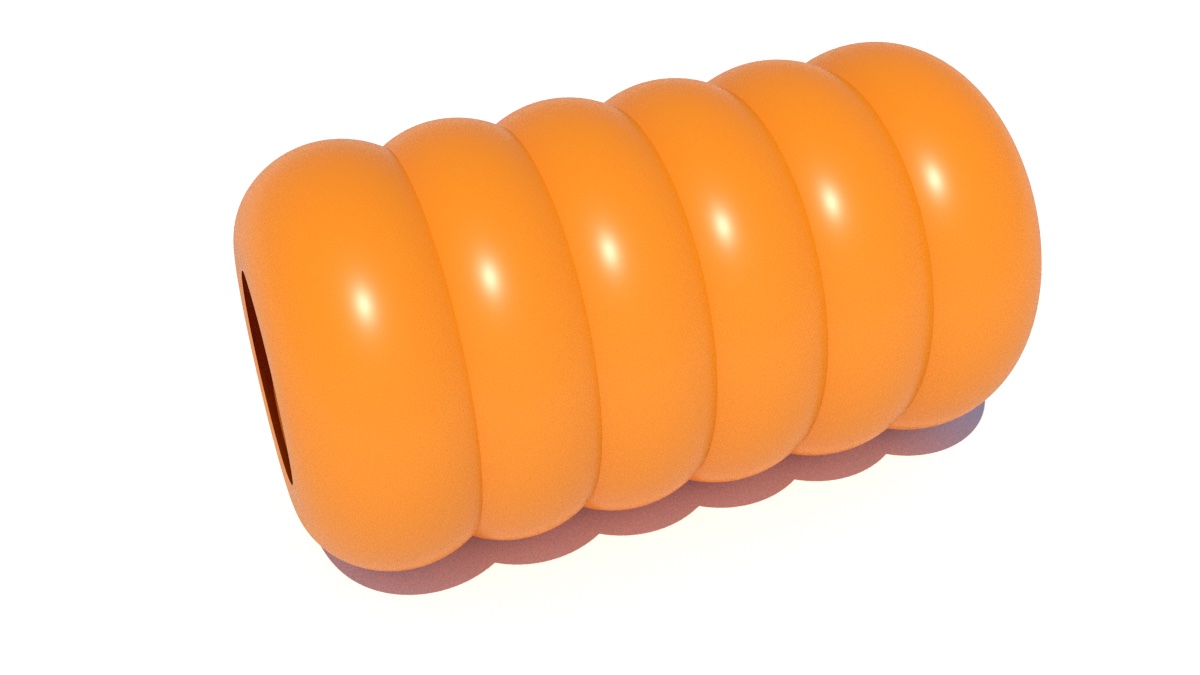}
	\end{minipage}
	\begin{minipage}{0.45\textwidth}
		\centering
		\includegraphics[width=\linewidth]{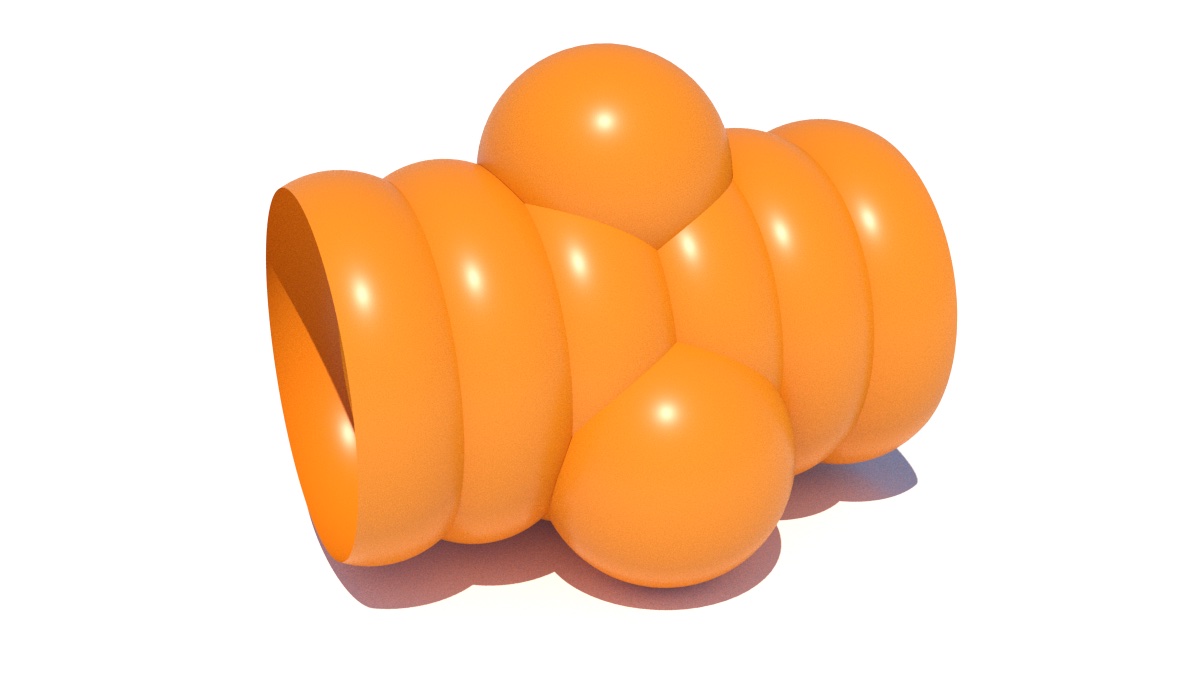}
	\end{minipage}
 
  \caption{A nodoid with necksize $r=-\frac 12$ and  a bubbleton with $n=3$: for $n=2$
    the parameter $\mu_2=-1$ and there is  no classical
    Darboux transform with 2 lobes for necksize $r=-\frac 12$.}
    \label{fig:nodbubbleton}
\end{figure}

% In the case of a cylinder, that is $r=\frac 12$ and
% \[
%   f(x,y) =\frac
%   12(ix + je^{-iy})
% \]
% we can solve (\ref{eq:alphax}) 
% \[
%   \alpha_x =\frac 12(f_y\alpha(a-1)-f_x\alpha b)
% \]
% explicitly and obtain
% \begin{align*}
%    \alpha_\pm&=e^{\frac {iy}2}\left(\pm\sqrt{1-a}+j (\sqrt 2\pm
%          \sqrt{1+a})\right)e^{\pm\frac i{2\sqrt 2}(x\sqrt{1-a} +y \sqrt{1+a}) } 
%          )  
% \end{align*} 
% which coincides with
% the parallel sections given in \cite{cmc}.  Moreover, we see that the
% parallel section obtained here are special cases of the parallel
% sections of a surface of revolution in \cite{sym-darboux} (after
% adjusting the spectral parameter to account for the choice of dual
% surface): we obtain exactly those classical Darboux transforms which
% are CMC.

   %  In particular, the resonance points of the round cylinder are given by
   %  \[
   %    \mu_n = 2n^2-1\pm 2n\sqrt{n^2-1}\,, \qquad n\in\Z\,,
   %  \]
   %  and the corresponding parallel sections are given by
   %  \[
   %   \alpha_\pm = \frac 1{\sqrt{2}} e^{\frac{iy}2}(\pm \sqrt{1-n^2} +
   %   j \sqrt 2(1 \pm n))e^{\pm\frac i{2\sqrt 2}(\sqrt{1-n^2}x+\sqrt 2 n y)}\,.
   % \]

 Finally, we can use Bianchi permutability to obtain
 multibubbletons. Given two parallel sections at distinct resonance
 points $\mu_n\not=\mu_l$, that is, $n\not=l$, we obtain two
 bubbletons with $n$ and $l$ lobes respectively. The common Darboux
 transform $\hat f$, a \emph{doublebubbleton},  of $f_n, f_l$ is a surface with two bubbles on it,
 where one bubble  has $n$ lobes and the other one has $l$ lobes (see Figure~\ref{fig:und23bubbleton}).

\begin{figure}[H]
	\includegraphics[width=0.5\linewidth]{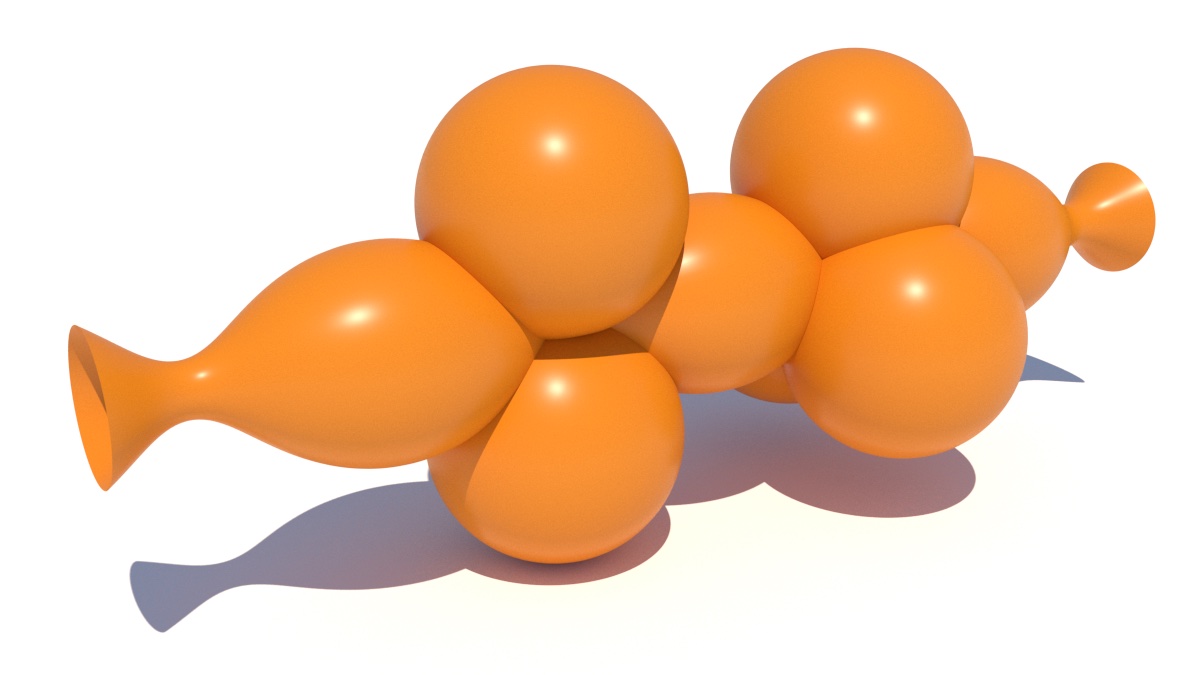}
	\caption{Delaunay doublebubbleton as a common Darboux transform of
		bubbletons with $n=2$ and $n=3$ lobes where $r=\frac 15$.}
	\label{fig:und23bubbleton}
\end{figure}

Applying Bianchi permutability one can now add an arbitrary number of
bubbles to the Delaunay surface. 
Since  Bianchi permutability needs distinct spectral parameter, we
 obtain  CMC multibubbletons with any
number of bubbles but every pair of bubbles has different numbers of
lobes.

On the other hand, in a recent paper \cite{sym-darboux} we showed that the
double Darboux transformation of a Delaunay surface which uses the
same spectral parameter twice (using a Sym--type argument) does not
give new closed CMC surfaces: 
In particular, we cannot obtain same-lobed CMC multibubbletons this
way.

\section{New CMC cylinders via the Darboux transformation}
\label{sect:newcmc}

In this section we will discuss CMC Darboux transforms of Delaunay
surfaces which close on a multiple cover. This is primarily motivated by our aim
to find same--lobed CMC multibubbletons but also in itself gives rise
to new interesting CMC surfaces.

In the case of a cylinder, Sterling and Wente \cite{wente_sterling}
discussed bubbletons on an $m$--fold cover via the Bianchi--B\"acklund
transform. In this case, the number of lobes $n$ must be bigger than
$m$. This naturally extends  the result for the single cover cylinder
where $n>1$.  We will recover this result in this section by investigating the
multipliers of 
parallel sections of a Delaunay surface on a multiple
cover. Surprisingly, in the case of unduloids and nodoids, additional
cases appear: in addition to the natural generalisations of the single
cover conditions $n>m$, we also obtain solutions for $n<m$. % In this case, the geometry of the resulting
% Darboux transforms look differently: there is no localised bubble as before
% but the number $n$ rather gives the dihedral symmetry of the surface. 

To this  end, we will first investigate the
resonance points of a multiple cover of a Delaunay surface, that
is, we will consider the surfaces
\[
  f(x,y) = ip(x) + jq(x) e^{-iy}
\]
on a $m$--fold cover, that is,  $x\in\R, y\in[0, 2m\pi], m\in \N$.   Then the parallel
sections for $\mu\in\R\setminus\{0,1\}$  are still given by Theorem \ref{thm:parallel explicit}
\[
\alpha_\pm^m = e^{\frac {iy}2}\left(q b -i q'(a-1)+ j\left(1 +p'(a-1)\pm
    t\right)\right) e^{ \pm \frac {ity}2}c_\pm
\]
for $t=\sqrt{1+2 r(1-r)(a-1)}\not=0$.

However, the resonance points and
multipliers change since we are now investigating the parallel
sections on an $m$--fold cover.  As before excluding 
$t=0$, the
multipliers of the parallel sections $\alpha_\pm^m$ are
\[
   h_\pm^m = (-1)^me^{\pm m\pi it} 
 \]
  which shows that the resonance points  $\mu_n^m$ have to satisfy
 \[
  1+2r(1-r)(a-1) =\frac{ n^2}{m^2}\,, \quad n\in\N_*,
\]
that is, the resonance points must be of the form
\[
  \mu_{n}^m=\frac{1}{2r(1-r)}\left(\frac{n^2-m^2}{m^2} +2r(1-r) \pm
  \sqrt{\frac{n^2-m^2}{m^2}\left(\frac{n^2-m^2}{m^2}+ 4r(1-r)\right)}\right).
\]
Note that $\alpha_\pm^m$ are only sections with multipliers if we
consider them as sections on a $m$--fold cover. That is, we cannot
construct Darboux transforms with period $2\pi \tau$ with $\tau>0$
this way if $\tau\not\in \Z$.  Moreover, we can assume without loss of
generality that $(m,n)$
are co--prime since $\mu_{n}^m =\mu_{n'}^{m'}$ for $\frac{n}{m} = \frac{n'}{m'}$.

Additionally, $(m,n)$ must  satisfy the condition
\[
  \frac{n^2-m^2}{m^2}\left(\frac{n^2-m^2}{m^2}+ 4r(1-r)\right) >0,
\]
since we consider only real spectral parameter
$\mu\in\R\setminus\{0,\pm 1\}$.
% (We will return to the case of complex spectral parameter in a future
% paper). \todo{can we at least look at them!?!}

Note that by Theorem \ref{thm:mudtcmc} the resulting  $\mu$--Darboux
transforms for spectral parameter $\mu_n^m\in\R\setminus\{0, \pm 1\}$
are CMC surfaces in 3--space. Thus we have extended the result for
round cylinders in \cite{wente_sterling} (obtained via Bianchi's
B\"acklund transformation) to general Delaunay surfaces:

\begin{theorem}\label{thm:fullBif}
  Let $f(x,y) = ip + jqe^{-iy}$ be a Delaunay surface, $r\in(-\infty,
  \frac 12)$, $r\not=0$,
  $M= 1-(1-\frac 1{1-r})^2$, $q(x) = (1-r)\dn((1-r)x,
  M), p'(x) = q^2+r(1-r)$. Let $(m, n)$ with co--prime $m,n \in\N_*$, be an \emph{admissible
  pair}, that is,
   
\[ 
\begin{array}{@{}r@{\quad\quad}l@{\qquad}l@{}}
\text{for } r< 0: &	m < n, \,\frac{m-n}{2m} < r &\text{or}\qquad m
                                                      > n\\[.2cm]
\text{for } 0<r<\frac 12:& 		m < n & \text{or}\qquad m > n, \, r < \frac{m-n}{2m} \\[.2cm]
\text{for } r= \frac 12: &		m < n \,.&  
    \end{array}
\]

\begin{comment}

\[ 
\left\lbrace\,
\begin{array}{@{}l@{\quad}l@{\qquad \quad}l@{}}
	m < n, \,\frac{m-n}{2m} < r &\text{or}\quad m > n,  & \text{for } r \in (-\infty, 0), \\
		m < n & \text{or}\quad m > n, \, r < \frac{m-n}{2m}, & \text{for } r \in (0, \frac{1}{2}), \\
		m < n, &  &\text{for } r = \frac{1}{2}\,. \\
    \end{array}
\right.\]

\[\begin{cases}
		m > n \quad\text{or}\quad m < n, \,\frac{m-n}{2m} < r, & \text{for } r \in (-\infty, 0), \\
		m < n \quad\text{or}\quad m > n, \, r < \frac{m-n}{2m}, & \text{for } r \in (0, \frac{1}{2}), \\
		m < n, & \text{for } r = \frac{1}{2}\,. \\
              \end{cases}\]
          \end{comment}
          
Then every Darboux transform with parameter
  $\mu_n^m$ 
  is a closed CMC surface on the
$m$--fold cover of the Delaunay surface. Put differently,   for each admissible
pair $(m,n)$ the spectral parameter
  \[
    \mu_n^m= \frac{1}{2r(1-r)}\left(\frac{n^2-m^2}{m^2} +2r(1-r) \pm
  \sqrt{\frac{n^2-m^2}{m^2}\left(\frac{n^2-m^2}{m^2}+
      4r(1-r)\right)}\right)
  \]
  is a resonance point and $\mu_n^m \in\R\setminus\{0,\pm 1\}$.
\end{theorem}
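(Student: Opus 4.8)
The plan is to reduce the whole statement to the single scalar $t = \sqrt{1+2r(1-r)(a-1)}$ with $a = \frac{\mu+\mu^{-1}}{2}$, and to track two things in parallel: when $\mu_n^m$ is a genuine real parameter $\neq 0, \pm 1$, and when it is a resonance point on the $m$--fold cover. The multiplier computation preceding the theorem already gives the fundamental sections $\alpha_\pm^m$ the multipliers $h_\pm^m = (-1)^m e^{\pm i m\pi t}$. Since a general $d_\mu$--parallel section is $\alpha_+^m m_+ + \alpha_-^m m_-$ with $m_\pm\in\C$, and $\alpha_+^m,\alpha_-^m$ are independent, it carries a single multiplier for every choice of $m_\pm$ exactly when $h_+^m = h_-^m$, i.e. $e^{2im\pi t}=1$, i.e. $mt\in\Z$. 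Writing $mt = n\in\N_*$ yields the resonance condition $1+2r(1-r)(a-1) = n^2/m^2$, which I solve for $a$ and then for $\mu = a\pm\sqrt{a^2-1}$ to recover the stated formula for $\mu_n^m$. Thus, by construction, any $\mu$ coming from $t=n/m$ is a resonance point, with common multiplier $(-1)^{m}e^{\pm i n\pi}=(-1)^{m+n}$, as soon as it is a legitimate real parameter.

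First I would settle reality. From $a-1 = \frac{n^2-m^2}{2m^2 r(1-r)}$ (the resonance condition solved for $a$) one computes directly that
\[
a^2-1=(a-1)(a+1)=\frac{1}{\bigl(2r(1-r)\bigr)^2}\cdot\frac{n^2-m^2}{m^2}\left(\frac{n^2-m^2}{m^2}+4r(1-r)\right),
\]
so $a^2-1$ has the same sign as the discriminant $\Delta := \frac{n^2-m^2}{m^2}\bigl(\frac{n^2-m^2}{m^2}+4r(1-r)\bigr)$ appearing under the square root in $\mu_n^m$. Since $\mu = a\pm\sqrt{a^2-1}$ is real and $\neq\pm1$ precisely when $a^2-1>0$ (and is automatically $\neq0$, the two roots multiplying to $1$), the condition $\mu_n^m\in\R\setminus\{0,\pm1\}$ is equivalent to $\Delta>0$. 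It then remains only to check that the admissibility table is a case-by-case restatement of $\Delta>0$.

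The decisive algebraic step is the identity $4r(1-r)=1-(1-2r)^2$, which rewrites the second factor of $\Delta$ as
\[
\frac{n^2-m^2}{m^2}+4r(1-r)=\frac{n^2}{m^2}-(1-2r)^2=\frac{1}{m^2}\bigl(n-m(1-2r)\bigr)\bigl(n+m(1-2r)\bigr).
\]
Since $r<\tfrac12$ forces $1-2r>0$ (and $=0$ at the cylinder), the factor $n+m(1-2r)$ is positive, so $\mathrm{sign}(\Delta)=\mathrm{sign}(n-m)\cdot\mathrm{sign}\bigl(n-m(1-2r)\bigr)$, i.e. $\Delta>0\iff (n-m)\bigl(n-m(1-2r)\bigr)>0$. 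From here the three regimes fall out by elementary sign-chasing: for $r=\tfrac12$ the second factor is $n>0$, giving $n>m$; for $0<r<\tfrac12$ one has $m(1-2r)<m$, giving $n>m$, or else $n<m$ together with $n<m(1-2r)\Leftrightarrow r<\frac{m-n}{2m}$; and for $r<0$ one has $m(1-2r)>m$, giving $n<m$ (automatically), or else $n>m$ together with $n>m(1-2r)\Leftrightarrow r>\frac{m-n}{2m}$. This reproduces the table exactly.

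Finally I would assemble the conclusion: admissibility $\Rightarrow\Delta>0\Rightarrow\mu_n^m\in\R\setminus\{0,\pm1\}$ is a resonance point, whence by the Proposition every $d_{\mu_n^m}$--parallel section has a multiplier over the $m$--fold cover and the associated Darboux transform is defined there, and by Theorem~\ref{thm:mudtcmc} (as $\mu_n^m\neq0,1$) it is CMC. I expect the only genuinely delicate part to be the sign bookkeeping in the last step --- keeping track of the signs of $1-2r$ and of $n-m$ across the three necksize regimes --- but the factorisation via $4r(1-r)=1-(1-2r)^2$ turns this into routine casework rather than a real obstacle.
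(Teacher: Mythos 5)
Your proposal is correct and takes essentially the same route as the paper: you compute the multipliers $h_\pm^m=(-1)^m e^{\pm im\pi t}$ of the fundamental parallel sections from Theorem~\ref{thm:parallel explicit}, deduce the resonance condition $1+2r(1-r)(a-1)=n^2/m^2$ from $h_+^m=h_-^m$, solve for $\mu_n^m=a\pm\sqrt{a^2-1}$, reduce $\mu_n^m\in\R\setminus\{0,\pm1\}$ to positivity of the discriminant, and conclude via the section-with-multiplier Proposition and Theorem~\ref{thm:mudtcmc}, exactly as in the discussion that constitutes the paper's proof. The only difference is that you verify explicitly, via the factorisation $4r(1-r)=1-(1-2r)^2$ and the resulting sign identity $\operatorname{sign}(\Delta)=\operatorname{sign}(n-m)\cdot\operatorname{sign}\bigl(n-m(1-2r)\bigr)$, that the discriminant condition is equivalent to the admissibility table --- a detail the paper asserts without spelling out, so your version is a faithful elaboration rather than a different argument.
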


Note that \cite{wente_sterling} only considered the case of cylinders
via the Bianchi--B\"acklund transformation, and thus, only considered
the case when $n>m$. However, in the case of unduloids and nodoids,
interesting new CMC cylinders appear for $n<m$. We discuss the three
cases separately, assuming that $(m,n)$ is an admissible pair:

{\bf Cylinder.} In the case of a cylinder, $r=\frac 12$, we have
  $\mu_n^m\in\R\setminus\{0,1\}$
  if and only if $n>m$. The resulting Darboux transforms
  have a localised bubble with $n$ lobes and close
on the $m$--fold cover of the Delaunay surface. These are the bubbletons
  given in \cite{wente_sterling} (see Figure~\ref{fig:cylmulcover}).
  
\begin{figure}[H]
	\begin{minipage}{0.45\textwidth}
		\centering
		\includegraphics[width=\linewidth]{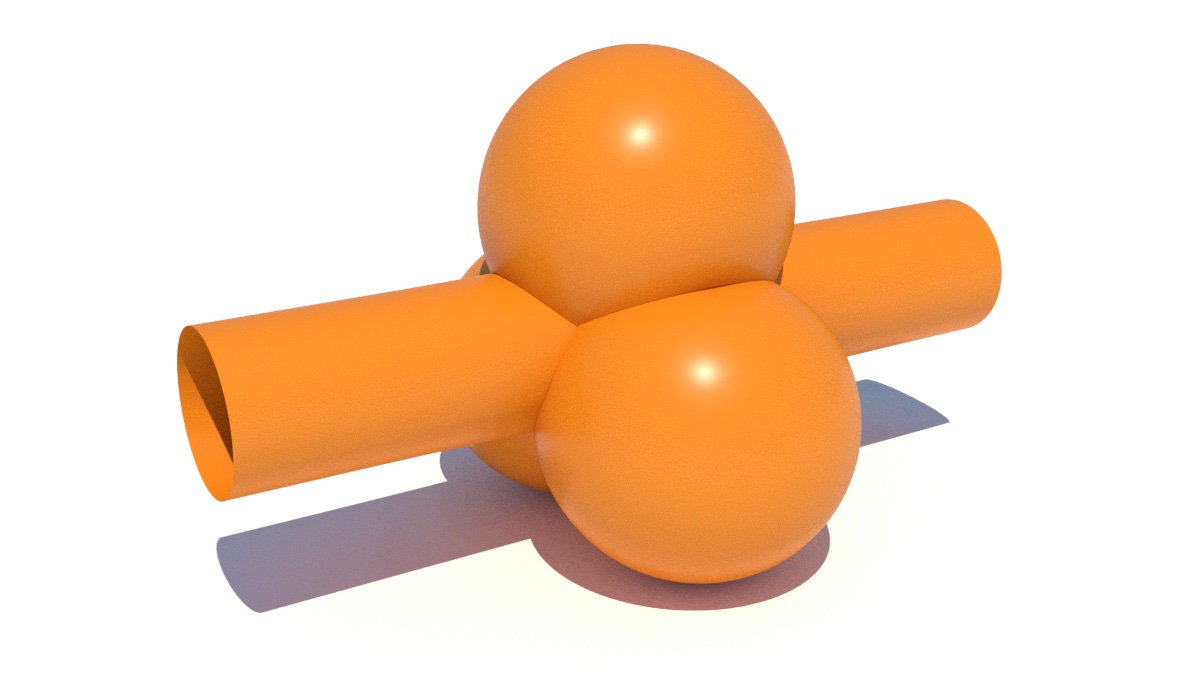}
	\end{minipage}
	\begin{minipage}{0.45\textwidth}
		\centering
		\includegraphics[width=\linewidth]{cylinder3.jpg}
	\end{minipage}
	\caption{Darboux transform of the cylinder  with $n=3$ on the 2--fold cover, next to a
		bubbleton with $n=3$ on the single cover.}
	\label{fig:cylmulcover}
\end{figure}

{\bf Unduloid.} In the case of an unduloid we have $0<r <\frac 12$ and
  $4r(1-r) \in  (0,1)$.  Thus, for $n>m$ we still have
  $\mu_n^m\in\R_*$. In this case, we obtain Darboux transforms
  with a localised bubble with $n$ lobes as in the case of cylinders (see Figure~\ref{fig:undmulcover}).

  \begin{figure}[H]
	\begin{minipage}{0.45\textwidth}
		\centering
		\includegraphics[width=\linewidth]{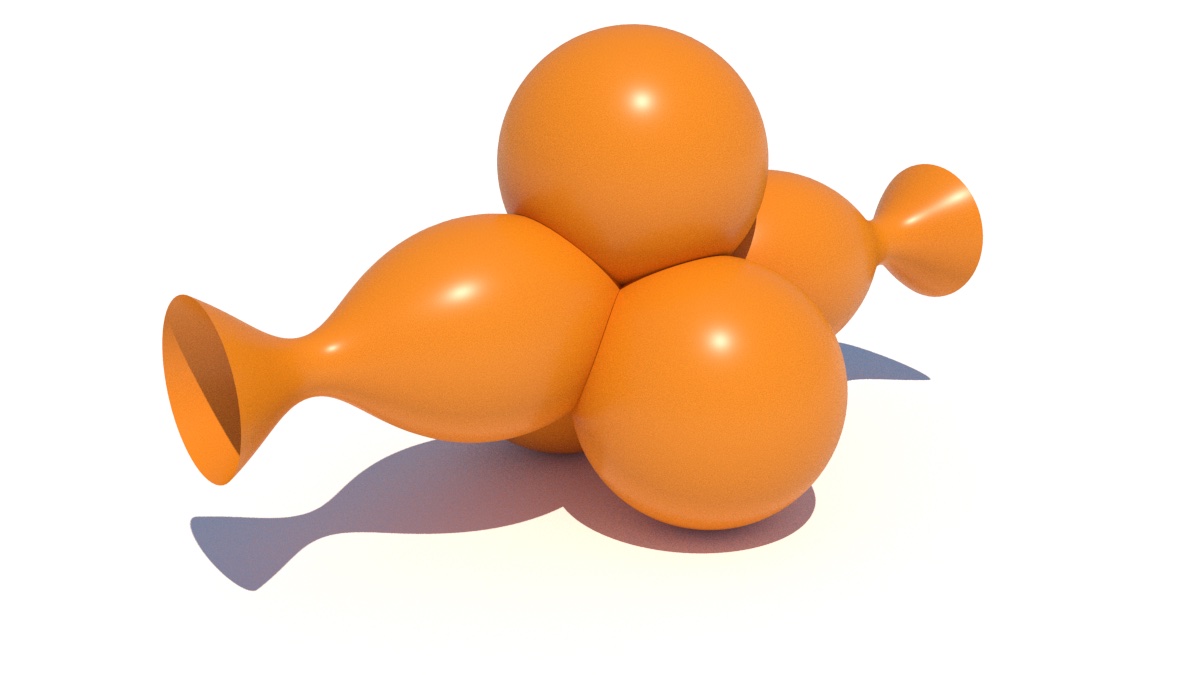}
	\end{minipage}
	\begin{minipage}{0.45\textwidth}
		\centering
		\includegraphics[width=\linewidth]{unduloid3.jpg}
	\end{minipage}
	\caption{Darboux transform of an unduloid  with necksize $r=\frac 15$,
	$n=3$ on the 2--fold cover, next to a
	bubbleton with $n=3$ on the single cover.}
	\label{fig:undmulcover}
  \end{figure}

  However, for unduloids, we obtain additional solutions which could not be
  observed in \cite{wente_sterling}:   if $n<m$ and $r <
  \frac{m-n}{2m}$ we also obtain closed CMC surfaces (see Figure~\ref{fig:undmulcover2}). Note that in
  this case, the ``bubble'' of the surface is less localised, and the
  number $n$ gives the dihedral symmetry of the surface rather than
  the number of lobes.

\begin{figure}[H]
	\includegraphics[width=0.8\linewidth]{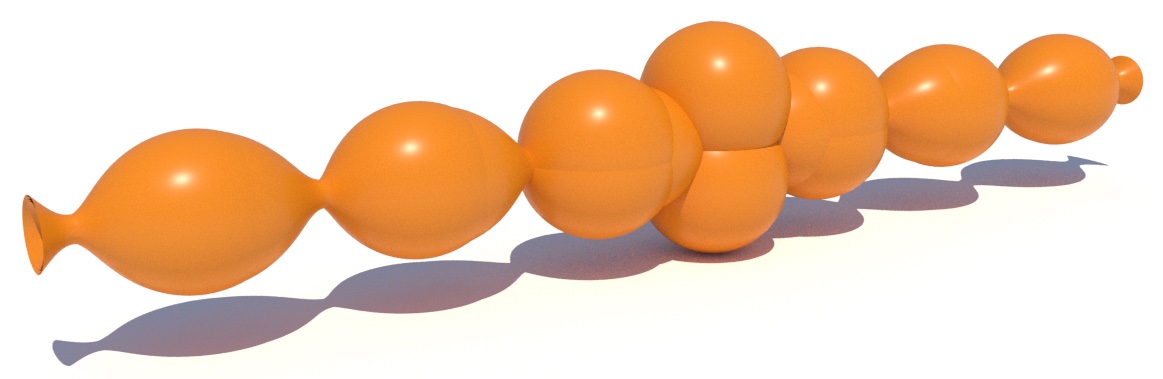}
	\caption{Darboux transform with $n=2$, $m=7$, of an
		unduloid with necksize $r=\frac 15$.}
	\label{fig:undmulcover2}
\end{figure}

{\bf Nodoid.} In the case of a nodoid, we have $r<0$, and as in the case
  of the single cover, we obtain an obstruction for the number $n$ of
  lobes on the bubbleton: in this case we have a bubbleton with a
  localised bubble with $n>m$ lobes on the $m$--fold cover if
  $\frac{m-n}{2m} >r$ (see Figure~\ref{fig:nodmulcover}).

\begin{figure}[H]
	\begin{minipage}{0.45\textwidth}
		\centering
		\includegraphics[width=\linewidth]{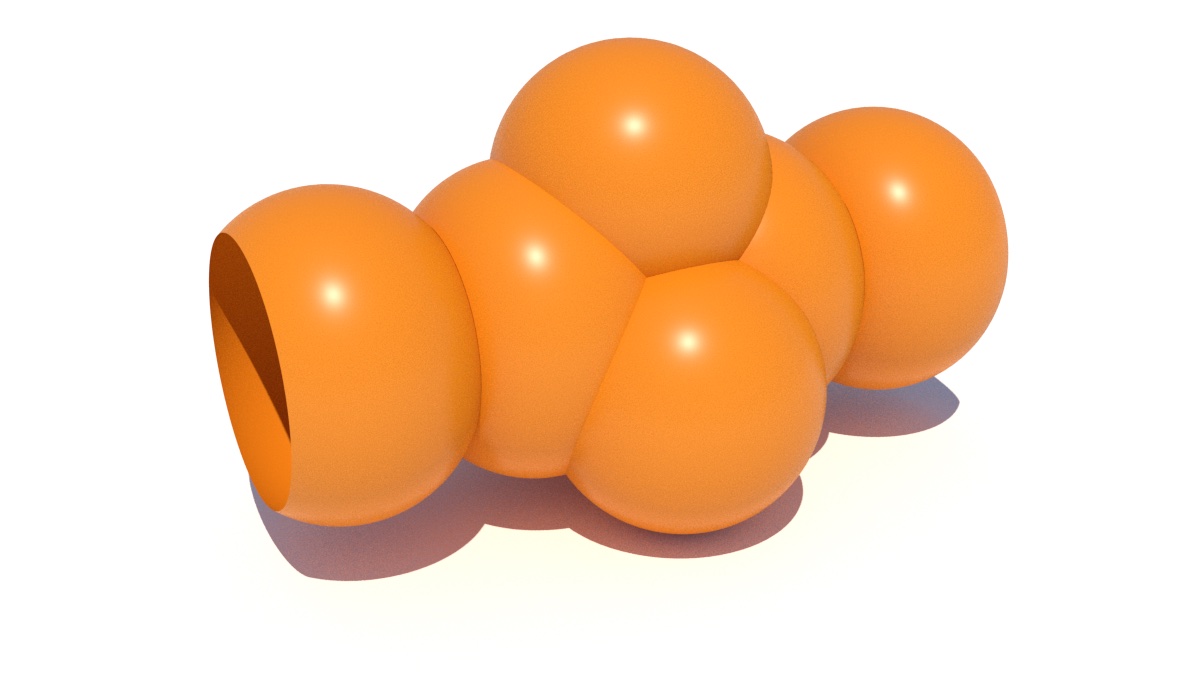}
	\end{minipage}
	\begin{minipage}{0.45\textwidth}
		\centering
		\includegraphics[width=\linewidth]{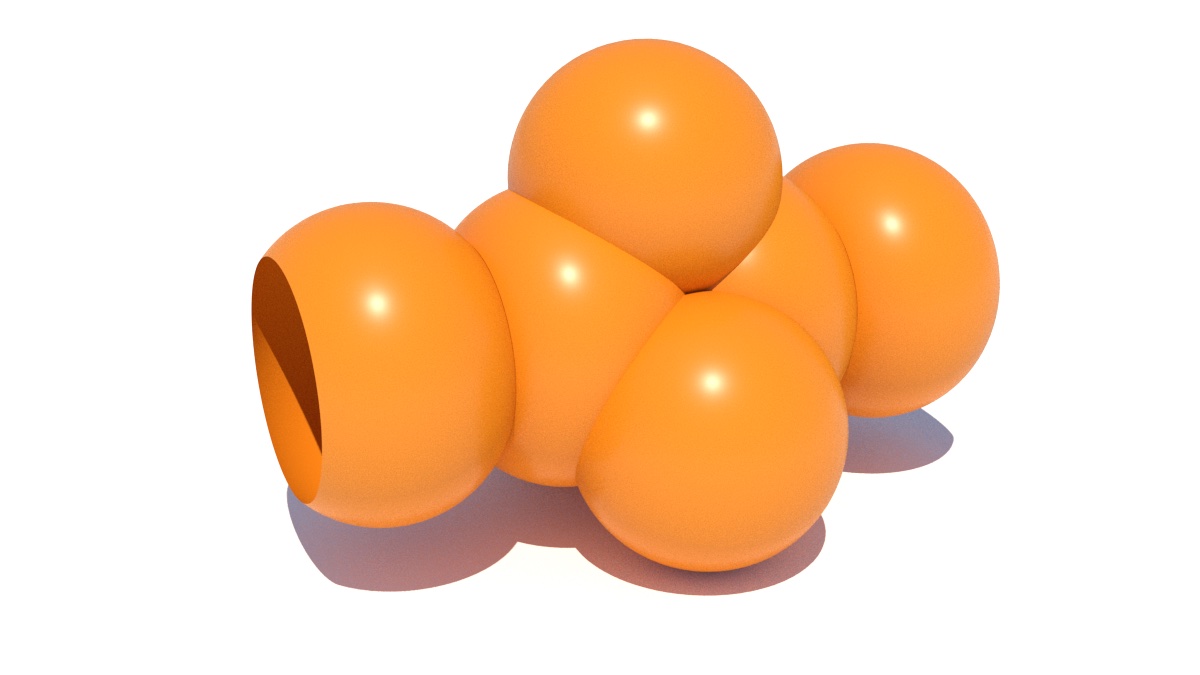}
	\end{minipage}
	\caption{Darboux transform of a nodoid with necksize $r=-\frac 18$,  $n=3$ and $m=2$, next to
		a bubbleton with $n=3$ on the single cover.}
	\label{fig:nodmulcover}
\end{figure}

In the case of a nodoid, every choice of $n<m$ gives a bubbleton on
the $m$--fold cover. In this case, the number $n$ again gives the
dihedral symmetry of the surface.

\begin{figure}[H]
	\begin{minipage}{0.45\textwidth}
		\centering
		\includegraphics[width=\linewidth]{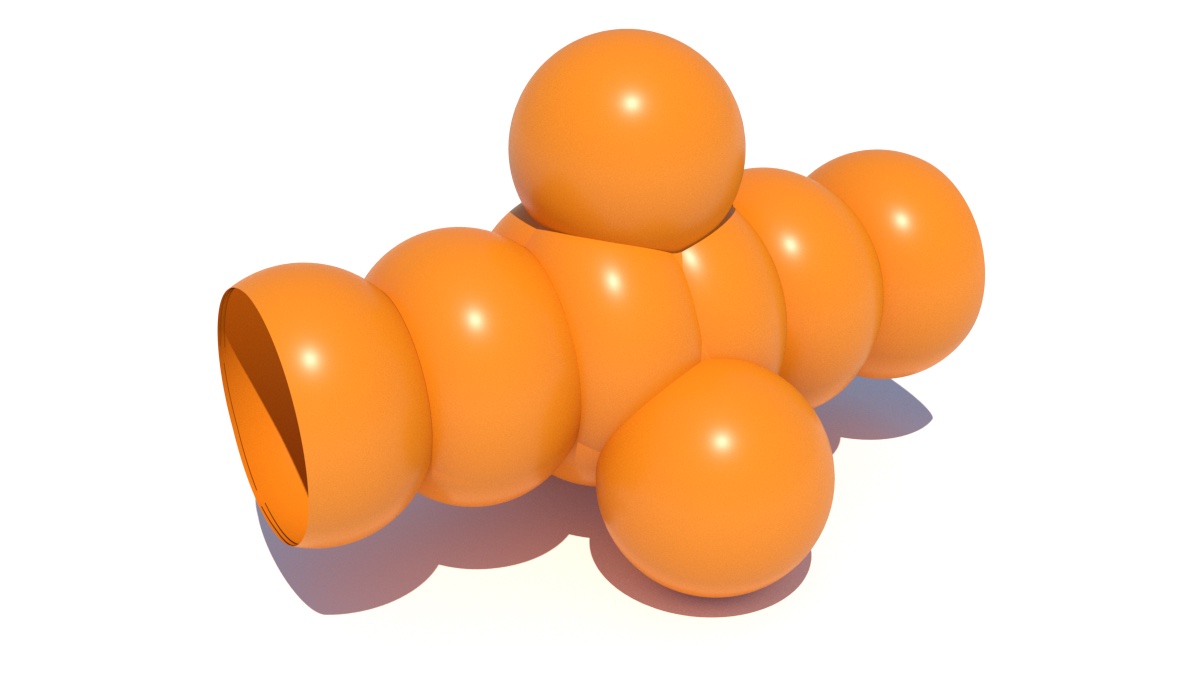}
	\end{minipage}
	\begin{minipage}{0.45\textwidth}
		\centering
		\includegraphics[width=\linewidth]{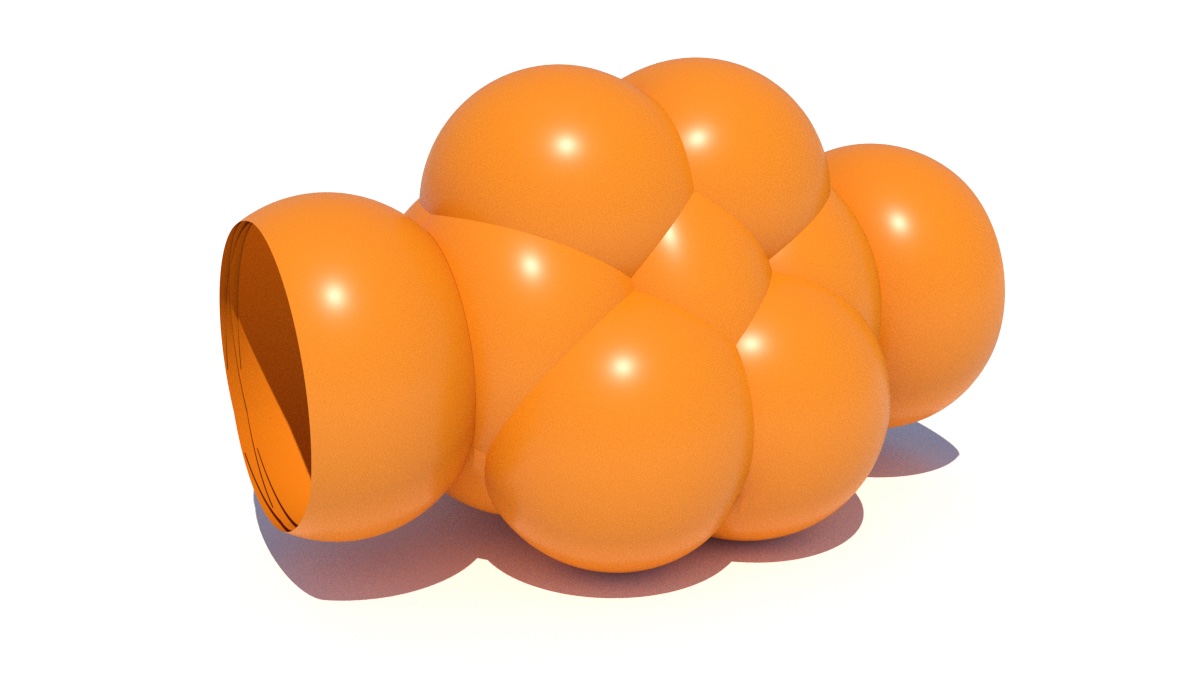}
	\end{minipage}
\caption{Darboux transform of a nodoid with necksize $r=-\frac 18$,
  $n=3$ and $m=4$ with two different
  initial conditions.}
  \label{fig:nodmulcover2}
\end{figure}

In particular, we can also use $n=1$ (see Figure~\ref{fig:nod1}):
\begin{figure}[H]
	\includegraphics[width=0.5\textwidth]{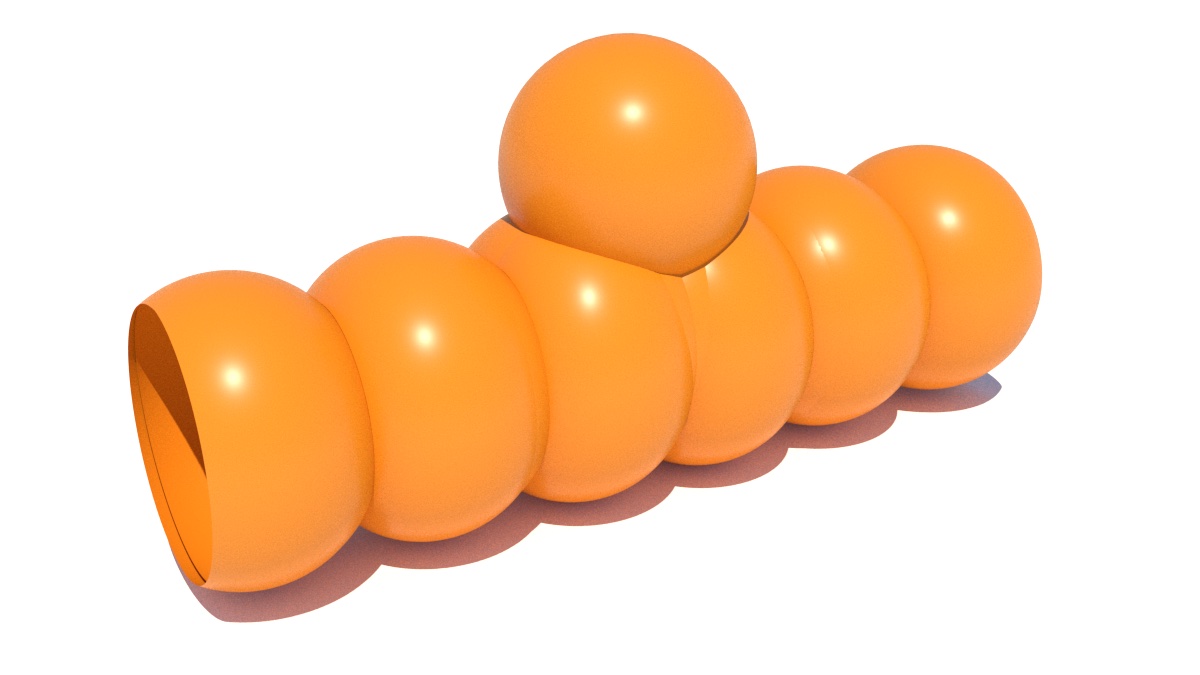}
	\caption{Darboux transform of a nodoid with necksize $r=-\frac 18$,  $n=1$ and $m=2$.}
	\label{fig:nod1}
\end{figure}
We will return to a more detailed analysis of these new examples of
closed CMC Darboux transforms of nodoid and unduloids in a
future paper.

\section{Same--lobed CMC multibubbletons}
\label{sect:same}

Changing our point of view, we now fix the number of lobes $n$ and
consider all possible $m$ such that $\mu_n^m$ is a resonance point.
To obtain same--lobed CMC multibubbletons (with localised bubbles) we
restrict here to the case $n>m$: for such an admissible pair $(m,n)$ we
then obtain a bubbleton with $n$ lobes on an $m$--fold cover, and each
$\mu_n^m\in\R_*$ is a resonance point.

Recall that Bianchi permutability requires  different spectral parameters
$\mu_1\not= \mu_2$ to obtain new surfaces as common Darboux transform
of the two surfaces $f_1$ and $f_2$. 
Using now resonance points $\mu_n^{m_1}\not= \mu_n^{m_2}$ given by two admissible pairs $(m_1, n)$ and $(m_2, n)$ with
$m_1\not=m_2$,  we obtain by Bianchi permutability a CMC doublebubbleton $\hat f$  which has two bubbles with
$n$ lobes each. Note that $\hat f$ closes on the $L$--fold cover of
the Delaunay surface where $L$ is the least common multiple
of $m_1$ and $m_2$.   Repeating the procedure we obtain further
multibubbletons as long as there are distinct admissible pairs
$(m,n)$, $m<n$. Denoting by
\[
  \varphi(n) =  \#\{ m\in \N \mid m,n \, \text{co--prime}, \, 1\le m<n, \, \frac{m-n}{2m}<r\} ,
\]
the number of admissible pairs with $m<n$, (for cylinders and
unduloids,  this is just Euler's totient
function $\varphi$ of $n$),
we obtain our final result (see also Figure~\ref{fig:5555}).

\begin{theorem}\label{thm:samecmc}
Let $f$ be a Delaunay surface with necksize $r$ and 
  denote by $\varphi(n)$ the number of admissible pairs $(m,n)$ with $m<n$. 
 Then for $1\le
l\le \varphi(n)$  there exists a CMC
multibubbleton with $l$ bubbles, each with $n$ lobes, which closes  on a 
multiple cover of the Delaunay surface. 
\end{theorem}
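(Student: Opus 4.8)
The plan is to produce the $l$-bubble surface as an $l$-fold iterated Bianchi permutability of single bubbletons that all share the lobe number $n$ but are attached along pairwise distinct admissible pairs. Since $1\le l\le\varphi(n)$, I first fix $l$ distinct admissible pairs $(m_1,n),\dots,(m_l,n)$ with each $m_i<n$; such a choice exists by the very definition of $\varphi(n)$. Put $L=\operatorname{lcm}(m_1,\dots,m_l)$ and work on the $L$-fold cover of $f$. By Theorem~\ref{thm:fullBif} each $\mu_n^{m_i}\in\R\setminus\{0,\pm1\}$ is a resonance point whose $d_{\mu_n^{m_i}}$-parallel section $\alpha_i$ from Theorem~\ref{thm:parallel explicit} produces a bubbleton $f_i$ with $n$ lobes that closes on the $m_i$-fold cover, and hence on the $L$-fold cover as well. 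The case $l=1$ is then immediate from Theorem~\ref{thm:fullBif}.

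Next I would record the two facts that make the permutability machinery applicable on the $L$-fold cover. First, the chosen resonance points are pairwise distinct: for fixed $n$ the relation $1+2r(1-r)(a-1)=n^2/m_i^2$ forces distinct values of $a$, hence distinct $\mu_n^{m_i}$ once the $+$ branch is fixed as in the single-cover case. This is exactly the hypothesis $\mu_1\neq\mu_2$ required for Bianchi permutability; were two parameters equal the construction would collapse to the original Delaunay surface, as remarked after (\ref{eq:commonDTparallel}). Second, every $\alpha_i$ is a section with multiplier on the $L$-fold cover: writing $t_i=n/m_i$ and $L=m_ik_i$, the multiplier of $\alpha_\pm^L$ under $y\mapsto y+2L\pi$ is $(-1)^Le^{\pm iL\pi t_i}=(-1)^{L+k_in}$, a single scalar shared by both fundamental sections, so all $d_{\mu_n^{m_i}}$-parallel sections acquire a multiplier there.

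The core of the argument is an induction on $l$ that builds the multibubbleton one bubble at a time while transporting the remaining spectral data. After the first Darboux step producing $f_1$, I would carry each $\alpha_j$ with $j\ge 2$ to a $d_{\mu_n^{m_j}}$-parallel section of $f_1$ by the permutability formula (\ref{eq:commonDTparallel}), $\alpha_j\mapsto\alpha_j-\alpha_1\beta_1^{-1}\beta_j$, then from $f_1$ to the doublebubbleton, and so on, so that at each stage Bianchi permutability is applied between the current surface and one freshly transported section. The corollary on permutability of sections with multiplier shows precisely that transporting a section with multiplier again yields a section with multiplier---its computation $\gamma^*\alpha=\alpha(h_2)_\gamma$ uses only (\ref{eq:beta})--(\ref{eq:commonDTparallel}) and the homomorphism property of the multipliers---so every intermediate surface, and the final $\hat f$, is defined on the $L$-fold cover. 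Theorem~\ref{thm:mudtcmc} guarantees the CMC property is preserved at each step, and because the $l$ spectral parameters are distinct no bubble is cancelled; each pair $(m_i,n)$ therefore contributes one localised bubble with $n$ lobes, yielding the desired same-lobed multibubbleton.

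The hard part will be the bookkeeping of this iteration rather than any single estimate. One must verify that the transported sections remain parallel for the correct connection $d_{\mu_n^{m_j}}$ at each stage, that their parameters stay pairwise distinct so that every permutability square is non-degenerate, and above all that the multiplier property is genuinely inherited through all $l-1$ transports and not merely the first. This last point is the delicate step, but it is handled uniformly by the corollary's computation, which is insensitive to how many previous steps have been performed; the accompanying geometric claim that the bubbles remain localised and do not merge then follows from the distinctness of the $\mu_n^{m_i}$ together with the single-bubble analysis already recorded for Theorem~\ref{thm:fullBif}.
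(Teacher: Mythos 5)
Your proposal is correct and follows essentially the same route as the paper, whose proof of Theorem~\ref{thm:samecmc} is the discussion preceding it: choose $l$ distinct admissible pairs $(m_i,n)$, note that fixed $n$ and distinct $m_i$ force distinct values of $a$ and hence distinct resonance points $\mu_n^{m_i}$, pass to the $\operatorname{lcm}$-fold cover where each $\alpha_\pm$ carries the common multiplier $(-1)^{L+k_in}$, and iterate Bianchi permutability, with closedness of each intermediate surface guaranteed by the corollary on sections with multiplier and the CMC property by Theorem~\ref{thm:mudtcmc}. Your write-up in fact makes explicit several points the paper leaves implicit (the multiplier computation on the $L$-fold cover, the branch bookkeeping $\mu\leftrightarrow\mu^{-1}$, and the transport of the remaining parallel sections through each permutability step), but the underlying argument is identical.
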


\begin{figure}[H]
	\begin{minipage}{0.49\textwidth}
		\centering
		\includegraphics[width=\linewidth]{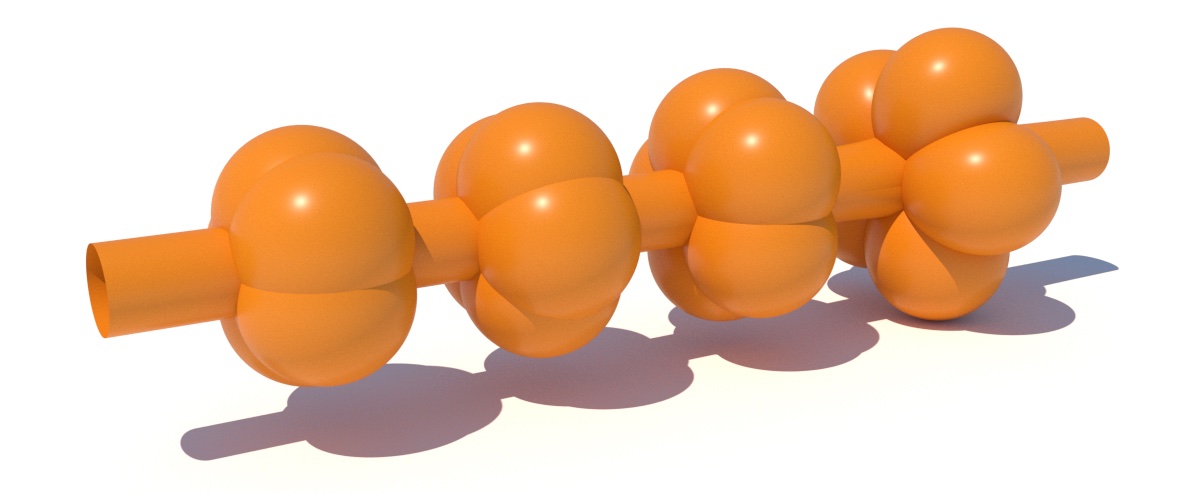}
	\end{minipage}
	\begin{minipage}{0.49\textwidth}
		\centering
		\includegraphics[width=\linewidth]{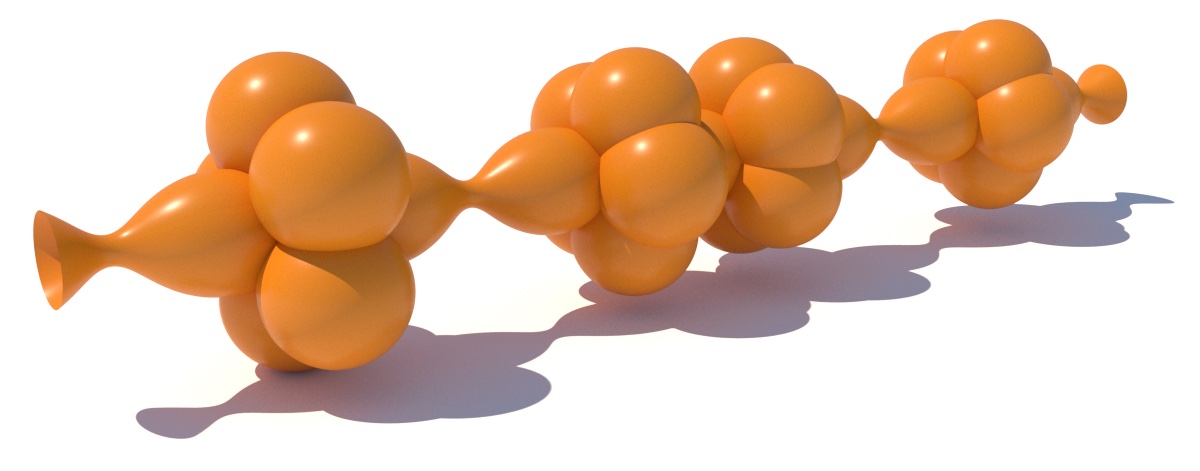}
	\end{minipage}
\caption{CMC same--lobed multibubbletons of a cylinder and unduloid  given via Bianchi permutability:
  here $n=5$, $\varphi(5)=4$ and all  available resonance points $\mu_5^1, \mu_5^2,
  \mu_5^3$ and $\mu_5^4$ are used. The multibubbletons close on the 12--fold
  cover and have 4 bubbles with 5 lobes each. }
  \label{fig:5555}
\end{figure}

% \bib, bibdiv, biblist are defined by the amsrefs package.
\begin{bibdiv}
\begin{biblist}

\bib{holly_tori}{article}{
      author={Bernstein, Holly},
       title={Non-special, non-canal isothermic tori with spherical lines of
  curvature},
        date={2001},
     journal={Trans. Amer. Math. Soc.},
      volume={353},
      number={6},
       pages={2245\ndash 2274},
      review={\MR{1814069}},
        doi = {10.1090/S0002-9947-00-02691-X},
}

\bib{bianchi_ricerche_1905}{article}{
      author={Bianchi, Luigi},
       title={Ricerche sulle superficie isoterme e sulla deformazione delle
  quadriche},
        date={1905},
     journal={Ann. Mat. Pura Appl. (3)},
      volume={11},
      number={1},
       pages={93\ndash 157},
         doi = {10.1007/BF02419963}
}

\bib{bianchi_lezioni_1903}{book}{
      author={Bianchi, Luigi},
       title={Lezioni di Geometria Differenziale, Volume II},
   publisher={Enrico Spoerri},
     address={Pisa},
        date={1903},
}

\bib{Bob}{article}{
      author={Bobenko, Alexander~I.},
       title={All constant mean curvature tori in $R^3$, $S^3$, $H^3$ in terms
  of theta-functions},
        date={1991},
     journal={Math. Ann.},
      volume={290},
      number={2},
       pages={209–245},
      review={\MR{1109632}},
        doi = {10.1007/BF01459243},
}

\bib{conformal_tori}{article}{
      author={Bohle, Christoph},
      author={Leschke, Katrin},
      author={Pedit, Franz},
      author={Pinkall, Ulrich},
       title={Conformal maps from a 2-torus to the 4-sphere},
        date={2012},
     journal={J. Reine Angew. Math.},
      volume={671},
       pages={1\ndash 30},
      review={\MR{2983195}},
        doi = {10.1515/CRELLE.2011.156},
}

\bib{burstall_isothermic_2006}{incollection}{
      author={Burstall, Francis~E.},
       title={Isothermic surfaces: conformal geometry, {{Clifford}} algebras
  and integrable systems},
        date={2006},
   book={title={Integrable systems, geometry, and topology},
      editor={Terng, Chuu-Lian},
      series={{{AMS}}/{{IP Stud}}. {{Adv}}. {{Math}}.},
      volume={36},
      address={Providence, RI},
   publisher={{Amer. Math. Soc.}}},
       pages={1\ndash 82},
      review={\MR{2222512}},
      doi={10.1090/amsip/036/01}
}

\bib{simple_factor_dressing}{article}{
      author={Burstall, Francis~E.},
      author={Dorfmeister, Josef},
      author={Leschke, Katrin},
      author={Quintino, A.~C.},
       title={Darboux transforms and simple factor dressing of constant mean
  curvature surfaces},
        date={2013},
     journal={Manuscripta Math.},
      volume={140},
      number={1-2},
       pages={213\ndash 236},
      review={\MR{3016491}},
        doi = {10.1007/s00229-012-0537-2},
}

\bib{coimbra}{book}{
      author={Burstall, Francis~E.},
      author={Ferus, Dirk},
      author={Leschke, Katrin},
      author={Pedit, Franz},
      author={Pinkall, Ulrich},
       title={Conformal geometry of surfaces in $S^4$ and quaternions},
      series={Lecture Notes in Mathematics},
   publisher={Springer-Verlag},
     address={Berlin},
        date={2002},
      volume={1772},
        ISBN={978-3-540-43008-7},
      review={\MR{1887131}},
        doi = {10.1007/b82935},
}

\bib{cmc}{article}{
      author={Carberry, E.},
      author={Leschke, Katrin},
      author={Pedit, Franz},
       title={Darboux transforms and spectral curves of constant mean curvature
  surfaces revisited},
        date={2013},
     journal={Ann. Global Anal. Geom.},
      volume={43},
      number={4},
       pages={299\ndash 329},
      review={\MR{3038538}},
        doi = {10.1007/s10455-012-9347-8},
}

\bib{periodic_discrete}{unpublished}{
      author={Cho, Joseph},
      author={Leschke, Katrin},
      author={Ogata, Yuta},
       title={Periodic discrete {{Darboux}} transforms},
        note={Submitted},
}

\bib{sym-darboux}{article}{
      author={Cho, Joseph},
      author={Leschke, Katrin},
      author={Ogata, Yuta},
       title={Generalised {{Bianchi}} permutability for isothermic surfaces},
     journal={Ann. Global Anal. Geom.},
      volume={61},
      number={4},
       pages={799\ndash 829},
        doi = {10.1007/s10455-022-09833-5},
      review={\MR{4423125}},
}

\bib{cho_simple_2019}{article}{
      author={Cho, Joseph},
      author={Ogata, Yuta},
       title={Simple factor dressings and {{Bianchi}}–{{Bäcklund}}
  transformations},
        date={2019},
        journal={Illinois J. Math.},
      volume={63},
      number={4},
       pages={619\ndash 631},
      review={\MR{4032817}},
      doi={10.1215/00192082-7988989},
}

\bib{darboux}{article}{
      author={Darboux, Gaston},
       title={Sur les surfaces isothermiques},
        date={1899},
     journal={C. R. Acad. Sci. Paris},
      volume={128},
       pages={1299\ndash 1305},
}

\bib{klassiker}{article}{
      author={Ferus, Dirk},
      author={Leschke, Katrin},
      author={Pedit, Franz},
      author={Pinkall, Ulrich},
       title={Quaternionic holomorphic geometry: Plücker formula, Dirac
  eigenvalue estimates and energy estimates of harmonic $2$-tori},
        date={2001},
     journal={Invent. Math.},
      volume={146},
      number={3},
       pages={507–593},
      review={\MR{1869849}},
        doi = {10.1007/s002220100173},
}

\bib{darboux_isothermic}{article}{
      author={{Hertrich-Jeromin}, Udo},
      author={Pedit, Franz},
       title={Remarks on the {{Darboux}} transform of isothermic surfaces},
        date={1997},
     journal={Doc. Math.},
      volume={2},
       pages={313\ndash 333},
      review={\MR{1487467}},
}

\bib{hitchin-harmonic}{article}{
      author={Hitchin, N.~J.},
       title={Harmonic maps from a $2$-torus to the $3$-sphere},
        date={1990},
     journal={J. Differential Geom.},
      volume={31},
      number={3},
       pages={627–710},
      review={\MR{1053342}},
        doi = {10.4310/jdg/1214444631},
}

\bib{Kilian_bubbletons}{article}{
      author={Kilian, Martin},
       title={Bubbletons are not embedded},
        date={2012},
     journal={Osaka J. Math.},
      volume={49},
      number={3},
       pages={653\ndash 663},
      review={\MR{2993061}},
      doi = {10.18910/23143},
}

\bib{kilian_circletons}{article}{
      author={Kilian, Martin},
       title={Dressing curves},
        date={2015},
      eprint={1508.00378},
      url={http://arxiv.org/abs/1508.00378}
}

\bib{KobayashiBubbletons}{article}{
      author={Kobayashi, Shim-Pei},
       title={Bubbletons in 3-dimensional space forms},
        date={2004},
     journal={Balkan J. Geom. Appl.},
      volume={9},
      number={1},
       pages={44\ndash 68},
      review={\MR{2205901}},
}

\bib{KobayashiBubbletons2}{article}{
      author={Kobayashi, Shim-Pei},
       title={Asymptotics of ends of constant mean curvature surfaces with
  bubbletons},
        date={2008},
     journal={Proc. Amer. Math. Soc.},
      volume={136},
      number={4},
       pages={1433\ndash 1443},
      review={\MR{2367117}},
        doi = {10.1090/S0002-9939-07-09137-X},
}

\bib{kobayashi_characterizations_2005}{article}{
      author={Kobayashi, Shim-Pei},
      author={Inoguchi, Junichi},
       title={Characterizations of {{Bianchi-Bäcklund}} transformations of
  constant mean curvature surfaces},
        date={2005},
        journal={Internat. J. Math.},
      volume={16},
      number={2},
       pages={101\ndash 110},
      review={\MR{2121843}},
      doi={10.1142/S0129167X05002801},
}

\bib{isothermic_paper}{unpublished}{
      author={Leschke, Katrin},
       title={Links between the integrable systems of {{CMC}} surfaces,
  isothermic surfaces and constrained {{Willmore}} surfaces},
        note={In preparation},
}

\bib{pin&ster}{article}{
      author={Pinkall, Ulrich},
      author={Sterling, Ivan},
       title={On the classification of constant mean curvature tori},
        date={1989},
     journal={Ann. of Math. (2)},
      volume={130},
      number={2},
       pages={407\ndash 451},
      review={\MR{1014929}},
        doi = {10.2307/1971425},
}

\bib{ruh_vilms}{article}{
      author={Ruh, Ernst~A.},
      author={Vilms, Jaak},
       title={The tension field of the {{Gauss}} map},
        date={1970},
     journal={Trans. Amer. Math. Soc.},
      volume={149},
       pages={569\ndash 573},
      review={\MR{0259768}},
        doi = {10.2307/1995413},
}

\bib{wente_sterling}{article}{
      author={Sterling, Ivan},
      author={Wente, Henry~C.},
       title={Existence and classification of constant mean curvature
  multibubbletons of finite and infinite type},
        date={1993},
     journal={Indiana Univ. Math. J.},
      volume={42},
      number={4},
       pages={1239\ndash 1266},
      review={\MR{1266092}},
        doi = {10.1512/iumj.1993.42.42057},
      }

\end{biblist}
\end{bibdiv}

\end{document}